\numberwithin{equation}{section}
\theoremstyle{plain}
\newtheorem{theorem}{Theorem}[section]
\newtheorem{corollary}[theorem]{Corollary}
\newtheorem{proposition}[theorem]{Proposition}
\newtheorem{lemma}[theorem]{Lemma}
\theoremstyle{remark}
\newtheorem{remark}[theorem]{Remark}
\newtheorem{example}[theorem]{Example}
\theoremstyle{definition}
\newenvironment{labeledlist}[2][\unskip]
{ 
  
  \begin{enumerate} }
{ \end{enumerate} }
\newcommand{\PP}{\mathcal{P}}
\newcommand{\DD}{\mathcal{D}}
\newcommand{\SSS}{\mathcal{S}}
\newcommand{\R}{\mathbb{R}}
\newcommand{\Z}{\mathbb{Z}}
\newcommand{\N}{\mathbb{N}}
\newcommand{\iii}{\mathtt{i}}
\newcommand{\jjj}{\mathtt{j}}
\newcommand{\fii}{\varphi}
\newcommand{\roo}{\varrho}
\newcommand{\lalpha}{\underline{\alpha}}
\renewcommand{\epsilon}{\varepsilon}
\newcommand{\eps}{\varepsilon}
\newcommand{\e}{\varepsilon}
\newcommand{\kk}{\mathtt{k}}
\newcommand{\OO}{\mathcal{O}}
\newcommand{\ii}{\mathtt{i}}
\newcommand{\jj}{\mathtt{j}}
\newcommand{\iin}[1]{\ii|_{#1}}
\DeclareMathOperator{\udimb}{\overline{dim}_B}
\DeclareMathOperator{\diml}{\underline{dim}_L}
\DeclareMathOperator{\dist}{dist}
\DeclareMathOperator{\diam}{diam}
\DeclareMathOperator{\spt}{spt}
\begin{document}

\title{On measures that improve $L^q$ dimension under convolution}

\author{Eino Rossi}
\address{
        Department of Mathematics and Statistics, University of Helsinki \\
        P.O. Box 68  (Pietari Kalmin katu 5) \\
        00014 University of Helsinki, Finland}
\email{eino.rossi@gmail.com}
\author{Pablo Shmerkin}
\address{
        Department of Mathematics and Statistics, Torcuato di Tella University \\
        and CONICET \\
        Av. Figueroa Alcorta 7350 (C1428BCW) \\
        Buenos Aires \\
        Argentina}
\email{pshmerkin@utdt.edu}
\thanks{ER acknowledges the supports of CONICET, the Finnish Academy of Science and Letters, Mittag-Leffler institute, and the University of Helsinki via the project
Quantitative rectifiability of sets and measures in Euclidean Spaces and Heisenberg groups (project No.7516125)}
\thanks{PS was partially supported by Projects CONICET-PIP 11220150100355 and PICT 2015-3675 (ANPCyT)}
\subjclass[2010]{28A80}
\keywords{$L^q$ dimension, convolution, uniform perfectness, Ahlfors-regular measures}
\date{\today}

\begin{abstract}
 The $L^q$ dimensions, for $1<q<\infty$, quantify the degree of smoothness of a measure. We study the following problem on the real line: when does the $L^q$ dimension improve under convolution? This can be seen as a variant of the well-known $L^p$-improving property. Our main result asserts that uniformly perfect measures (which include Ahlfors-regular measures as a proper subset) have the property that convolving with them results in a strict increase of the $L^q$ dimension. We also study the case $q=\infty$, which corresponds to the supremum of the Frostman exponents of the measure. We obtain consequences for repeated convolutions and for the box dimension of sumsets. Our results are derived from an inverse theorem for the $L^q$ norms of convolutions due to the second author.
\end{abstract}

\maketitle

\section{Introduction}

Let $\nu$ be a Borel probability measure on $\mathbb{R}^d$. Since convolution (of a function, measure, etc) with $\nu$ is a smoothing operation, a natural problem arises: quantifying the additional degree of smoothness ensured by convolving with $\nu$, in terms of the geometry of $\nu$. A particular instance of this problem that has received much attention is that of $L^p$ improvement: what measures $\nu$ have the property that the operator $T_\nu(f)= f*\nu$ maps $L^2$ to $L^{2+\e}$ for some $\e>0$? We recall that if $T_\nu$ maps $L^2$ to $L^{2+\e}$ then by interpolation it maps $L^p$ to $L^{p+\e(p)}$ for all $p\in (1,\infty)$ and some positive $\e(p)$. See \cite{Christ85, Hare88,HareRoginskaya05,Hare15, DooleyHareRoginskaya16} and references therein for progress on this problem. We remark that while wide classes of measures are known to have the $L^p$-improving property, including many Cantor-type measures such as the Cantor-Lebesgue measure on the ternary Cantor set, it remains open whether all Ahlfors-David regular measures of positive exponent on the real line are $L^p$-improving.

In this article we consider a different, but related, notion of smoothness: $L^q$ dimension. Given $\nu\in\PP(\R)$ (the family of compactly supported Borel probability measures on the line), this is defined as
\[
D(\nu,q) = \liminf_{m\to\infty} \frac{ -\log \sum_{Q\in\DD_m} \nu(Q)^q }{ (q-1)m },
\]
where $\DD_{m}$ denotes the collection of level $m$ dyadic half open intervals, that is,
\[
 \DD_m = \left\{ \left[\frac{k}{ 2^{m} },\frac{k+1}{ 2^{m} }\right) : k\in\Z \right\}.
\]
Here, and throughout the paper, the logarithms are to base $2$. The term $(q-1)$ is a normalizing factor that ensures $D(\nu,q)\in [0,1]$.

To emphasize the connection with $L^q$ norms, we note that $D(\nu,q)$ can alternatively be defined as follows: let
\[
\nu_m = 2^m \sum_{Q\in\DD_m} \nu(Q) \mathbf{1}_{Q}.
\]
Then $\nu_m$ is the density of a measure that is an absolutely continuous approximation of $\nu$ at scale $2^{-m}$, and
\begin{equation} \label{eq:Lq-dim-from-Lq-norm}
D(\nu,q) = 1 - \limsup_{m\to\infty} \frac{\log \|\nu_m\|_q^q}{(q-1) m}.
\end{equation}
(Alternatively, we could take $\nu_m$ to be the convolution of $\nu$ with a bump function adapted to the interval $[0,2^{-m}]$.) In particular, if $\nu$ has an $L^q$ density, then $D(\nu,q)=1$ (but not conversely). It is well known and easy to see that $q\mapsto D(\nu,q)$ is non-increasing. See \cite{FanLauRao02} for further background on $L^q$ dimensions and their relationships with other notions of dimension of a measure such as entropy and Hausdorff dimensions.

We also define the $L^\infty$ dimension of $\nu\in\PP(\R)$, denoted by $D(\nu,\infty)$, as the limit of $D(\nu,q)$ as $q\to\infty$ (which exists since $D(\nu,q)$ is decreasing and bounded from below by $0$). It is easy to see that $D(\nu,\infty)$ is the supremum of the Frostman exponents of $\nu$, i.e. of all $s\ge 0$ such that there is $C_s$ satisfying
\[
\nu(B(x,r)) \le C_s r^s \quad\text{for all } x\in\R, r>0.
\]
The values of $q$ are always assumed to be in $(1,\infty)$; when we want to refer to $D(\nu,\infty)$ we will do so explicitly.

We are interested in the growth of $L^q$ dimension under convolutions. Recall that if $\mu,\nu\in\PP(\R)$, then their convolution $\mu\ast\nu\in\PP(\R)$ is the projection of the product measure $\mu\times\nu$ under the addition map, i.e.
\[
 \mu\ast\nu(B) = (\mu\times\nu)(S^{-1}(B)),
\]
where $S(x,y) = x + y$. In keeping with the principle that convolution is a smoothing operation, it always holds that
\begin{equation}
 \label{eq:Lq_improving}
 D(\mu \ast \nu , q) \geq D( \mu , q )
\end{equation}
for any measures $\mu,\nu \in \PP(\R)$. This is a well-known consequence of the convexity of $x\mapsto x^q$, see e.g. \cite[Lemma 2.2]{FengNguyenWang2002}.

The question we are interested in is: when is there strict inequality in \eqref{eq:Lq_improving}? There are two trivial situations that prevent this: if already $D(\mu,q)=1$,  or if $\nu$ is a Dirac mass, then there is equality in \eqref{eq:Lq_improving}. However equality may also hold in situations which are far from these two extreme cases. For example, let $n_j$ be a rapidly increasing sequence ($n_j=j!$ works), and set $E=\cup_j [n_j,2 n_j]$.  Let $\widetilde{\mu}$ be the distribution of an independent sequence of random variables $(\omega_n)_{n=1}^\infty$ such that $\omega_n=0$ for $n\in E$ and $\omega_n$ is either $0$ or $1$ with probability $1/2$ for $n\in\N\setminus E$. Finally, let $\mu$ be the projection of $\widetilde{\mu}$ under the binary expansion map. It is not hard to see that $D(\mu,q)=1/2$ for all $q$ and, indeed, also $D(\mu\ast\mu,q)=1/2$ for all $q$, since $\mu\ast\mu$ has essentially the same structure. Using variants of this example, one can construct measures $\mu,\nu$ of any $L^q$-dimensions (possibly different from each other) such that $D(\mu*\nu,q)=\max\{ D(\mu,q),D(\nu,q) \}$. The key feature of these examples is that, even though \emph{globally} $\mu$ looks nothing like a Dirac mass or Lebesgue measure, \emph{locally and at a finite scale} it looks very much like one or the other, depending on the scale.

Our main result says, informally, that if $\nu$ is uniformly far from being atomic at all places and scales, then $D(\mu*\nu,q)>D(\mu,q)$ unless $D(\mu,q)=1$. To make this precise, we introduce the notion of \emph{uniformly perfect} sets and measures. A set $A\subset\R$ is called \emph{uniformly perfect} if there exists a constant $K > 1$ such that if $x\in A\not\subset B(x, K r)$ for some $r>0$, then $A\cap \big(B(x, K r)\setminus B(x,r)\big)\neq\varnothing$. We extend this definition to measures as follows: we say that a measure $\nu\in\PP(\R)$ \emph{uniformly perfect}, if there exist positive constants $N>1$ (not necessarily an integer) and $0 < \gamma \leq 1$, so that if $\spt\nu\not\subset B(x,N r)$, then
 \begin{equation}
  \label{eq:unif_perf}
  \nu(B(x,N r)) \geq 2^{\gamma}\nu(B(x,r)).
 \end{equation}
Hence, $\nu$ is uniformly perfect if the measure of a ball is not heavily concentrated near its center, in a uniform way. When we want to emphasize the involved constants, we use the term $(N,\gamma)$-uniformly perfect. This condition has also been called reverse doubling.
\begin{theorem}
 \label{thm:unif_perf_improving}
Given $q\in (1,\infty)$, $N > 1$ and $\gamma,\eta > 0$, there is $\eps = \eps(q,N,\gamma,\eta)>0$, such that the following holds:

 If $\nu\in\PP(\R)$ is an $(N,\gamma)$-uniformly perfect measure, then
 \[
 D(\mu\ast\nu,q) >  D(\mu,q)+\eps
 \]
 for any measure $\mu\in\PP(\R)$ with $D(\mu,q) \le 1-\eta$.
\end{theorem}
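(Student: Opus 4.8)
The plan is to derive this from the inverse theorem for $L^q$ norms of convolutions alluded to in the abstract. That inverse theorem should say, roughly, that if $D(\mu*\nu,q)$ is only negligibly larger than $D(\mu,q)$ (say by less than $\eps$) then at most scales $2^{-m}$, and at most locations, the measure $\mu$ at scale $2^{-m}$ (more precisely the density $\nu_m$ in the notation of \eqref{eq:Lq-dim-from-Lq-norm}, applied to $\mu$) must have a structure that is ``saturated'' in the sense that convolving with $\nu$ restricted to comparable scales does not spread its $L^q$ mass out --- i.e., at the relevant scales $\nu$ must itself look close to a single atom, \emph{or} $\mu$ must already look like (a rescaled copy of) Lebesgue measure there. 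Quantitatively: for all but a small proportion of scales $s \le m$, in a dyadic interval $Q\in\DD_s$ carrying a definite proportion of the mass, the conditional measure of $\mu$ on $Q$, rescaled to the unit interval and examined at a bounded number $T=T(q,\eta,\eps)$ of further dyadic generations, is either $\eps'$-close (in $L^q$, or in entropy at scale $2^{-T}$) to uniform, or $\nu$ rescaled to that scale is $\eps'$-close to an atom.

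The first step is to quote this inverse theorem precisely (it is attributed to the second author, so I would cite the relevant paper and state the exact form I need, in the contrapositive: small $L^q$-dimension gain forces the above local dichotomy). The second step is to observe that uniform perfectness of $\nu$ is exactly what rules out the ``$\nu$ looks like an atom at some scale'' alternative: if $\nu$ is $(N,\gamma)$-uniformly perfect, then at \emph{every} scale and location where $\spt\nu$ is not contained in a ball of radius $N\cdot 2^{-s}$, we have the mass-spreading estimate \eqref{eq:unif_perf}, which prevents $\nu$ from being $\eps'$-close to an atom once $\eps'$ is small enough depending on $N,\gamma$; and since $\spt\nu$ has positive diameter (it is not a point --- a point would violate uniform perfectness for all $r$), the ``$\spt\nu\subset B(x,Nr)$'' case only occurs for boundedly many of the smallest scales, which is negligible. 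The third step is then to confront the surviving alternative: at almost every scale $s$ and almost every mass-carrying interval $Q$, the rescaled conditional measure $\mu_Q$ must be $\eps'$-close to uniform at the bounded scale $2^{-T}$. The fourth step is a multiscale/renewal summation: closeness to uniform at scale $2^{-T}$ at almost every scale, when summed over $s=1,\dots,m$ along the tree of $\mu$, forces the entropy of $\mu$ at scale $2^{-m}$ to be at least $(1-\eta/2)m$ (say), hence $D(\mu,q)\ge 1-\eta/2 > 1-\eta$, contradicting the hypothesis $D(\mu,q)\le 1-\eta$. Here one uses the standard relationship between $L^q$ dimension and (Rényi) entropy, and the fact that the exceptional scales contribute a controlled deficit that can be absorbed by taking $\eps$ (and hence the proportion of exceptional scales) small.

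The main obstacle I anticipate is bookkeeping the quantifiers in the inverse theorem and running the multiscale argument cleanly. The inverse theorem is naturally a ``single-scale gap'' statement: one controls $\|\mu_m * \nu_m\|_q$ in terms of $\|\mu_m\|_q$ via a decomposition of the scale range $[1,m]$ into blocks of length $T$; within each block one gets a dichotomy, and one must sum the logarithmic contributions. Translating ``$D(\mu*\nu,q)-D(\mu,q)<\eps$'' (a $\liminf$/$\limsup$ statement) into ``for infinitely many $m$, the single-scale gain summed over $[1,m]$ is $<\eps m$'' and then into ``a $(1-\delta)$-proportion of blocks show the uniform alternative'' requires care with the $\liminf$ versus ``all large $m$'' issue, and with the constants $\eps' = \eps'(\eps)\to 0$ and $T=T(\eps',q,\eta)\to\infty$ feeding back into each other. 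A secondary technical point is making precise the notion of ``$\eps'$-close to an atom / to uniform'' used by the inverse theorem (presumably via $L^q$ norms or Rényi entropy of the rescaled pieces) and checking that uniform perfectness does rule out the atom case with a \emph{uniform} $\eps'$ depending only on $(N,\gamma,q)$ --- this is where \eqref{eq:unif_perf} iterated over the $\lceil \log N\rceil$ generations spanned by scaling factor $N$ gives the quantitative lower bound on how spread out $\nu$ is. The rest --- convexity inequalities, the entropy/$L^q$-dimension dictionary, and the final summation --- should be routine.
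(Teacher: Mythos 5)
Your proposal captures the paper's actual strategy: apply the inverse theorem from \cite{Shmerkin2018} (Theorem~\ref{thm:inverse} here), note that uniform perfectness of $\nu$ forces the set $B$ to lose a definite fraction of its $\nu$-mass at every scale where it fails to branch, so that the set $\mathcal{S}$ of scales of near-full branching for $\mu$ must occupy almost all of $[\ell]$, and conclude via item~\ref{inverse6} that $\|\mu^{(m)}\|_q$ is too small to be compatible with $D(\mu,q)\le 1-\eta$. This is exactly the chain of implications in Proposition~\ref{prop:lq_norm_unif_perf} (just run in the opposite order, deriving the contradiction from the $\mu$-side rather than from $\nu(B)\ge 2^{-\delta m}$), so the proposal is essentially correct and follows essentially the same route as the paper; the remaining work is the bookkeeping you flag, which the paper handles by fixing $\delta=\eta\gamma/(5N)$ and $D_0=\lceil 2N+2\rceil$ up front and by reducing to $2^{-m}$-measures via Lemma~\ref{lem:discret_convolution}.
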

We remark that in the special case when $\nu$ is a homogeneous self-similar measure on the real line and $q=2$ this statement was proved in \cite[Theorem 4.1]{MosqueraShmerkin2018}, with quantitative estimates. The proof there strongly uses self-similarity and therefore does not extend to more general measures.

Using Theorem \ref{thm:unif_perf_improving} repeatedly, we obtain Corollary \ref{cor:unif_perf_convolution_to_one}, which says that if $\{\mu_i\}_{i\in\N}$ is a family of $(N,\gamma)$-uniformly perfect measures, then $D(\ast_{i=1}^n \mu_i,\infty)$ converges to $1$.

Recall that $\nu\in\PP(\R)$ is called Ahlfors $\alpha$-reqular with constant $C\geq 1$ if
\begin{equation}
\label{eq:ahlfors}
 C^{-1} r^{\alpha} \leq \nu( B(x,r) ) \leq C r^{\alpha} \text{ for all } x\in\spt\nu\text{ and } r\in(0,1).
\end{equation}
It is not hard to see that if $\nu$ is Ahlfors $\alpha$-regular with constant $C>0$, then $\nu$ is uniformly perfect with parameters depending on $C,\alpha$; see Lemma \ref{lem:Ahlfors_is_unif_perf}. Hence we have the following corollary:
\begin{corollary}
\label{cor:DZ}
Given $q\in (1,\infty)$, and $C \geq 1,\ \alpha,\eta\in (0,1)$, there is $\eps = \eps(q,C,\alpha,\eta)>0$, such that the following holds:

 If $\nu\in\PP(\R)$ is  Ahlfors $\alpha$-regular with constant $C$, then
 \[
 D(\mu\ast\nu,q) >  D(\mu,q)+\eps
 \]
 for any measure $\mu$ with $D(\mu,q) < 1-\eta$.
\end{corollary}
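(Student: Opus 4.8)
The plan is to read off Corollary \ref{cor:DZ} directly from Theorem \ref{thm:unif_perf_improving} and Lemma \ref{lem:Ahlfors_is_unif_perf}; the only thing requiring attention is the bookkeeping of constants. By Lemma \ref{lem:Ahlfors_is_unif_perf}, any Ahlfors $\alpha$-regular measure with constant $C$ is $(N,\gamma)$-uniformly perfect for some $N = N(C,\alpha) > 1$ and $\gamma = \gamma(C,\alpha) \in (0,1]$ that depend \emph{only} on $C$ and $\alpha$. Feeding these values of $N$ and $\gamma$, together with the given $q$ and $\eta$, into Theorem \ref{thm:unif_perf_improving} produces $\eps = \eps(q,N,\gamma,\eta)$, which is therefore a function of $q$, $C$, $\alpha$, $\eta$ alone. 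Given an Ahlfors $\alpha$-regular $\nu \in \PP(\R)$ with constant $C$ and any $\mu \in \PP(\R)$ with $D(\mu,q) < 1-\eta$ (in particular $D(\mu,q) \le 1-\eta$), we then apply Theorem \ref{thm:unif_perf_improving} to the $(N,\gamma)$-uniformly perfect measure $\nu$ and conclude $D(\mu\ast\nu,q) > D(\mu,q) + \eps$, as claimed.

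For completeness, here is the short argument behind Lemma \ref{lem:Ahlfors_is_unif_perf}. Let $\nu$ be Ahlfors $\alpha$-regular with constant $C$, and fix $x \in \R$ and $r > 0$ with $\spt\nu \not\subset B(x,Nr)$, the value of $N$ to be fixed momentarily. We may assume $\nu(B(x,r)) > 0$, so there is $y \in B(x,r) \cap \spt\nu$; then $B(x,r) \subseteq B(y,2r)$ and $B(y,(N-1)r) \subseteq B(x,Nr)$, the second inclusion giving $\spt\nu \not\subset B(y,(N-1)r)$. For radii in the range where \eqref{eq:ahlfors} applies this yields
\[
\frac{\nu(B(x,Nr))}{\nu(B(x,r))} \;\ge\; \frac{\nu(B(y,(N-1)r))}{\nu(B(y,2r))} \;\ge\; \frac{C^{-1}((N-1)r)^{\alpha}}{C(2r)^{\alpha}} \;=\; C^{-2}\left(\frac{N-1}{2}\right)^{\alpha},
\]
and the last quantity exceeds $2^{\gamma}$ once $N = N(C,\alpha)$ is taken large enough and $\gamma = \gamma(C,\alpha)$ small enough, establishing \eqref{eq:unif_perf} with parameters depending only on $C$ and $\alpha$.

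In short, the corollary carries no difficulty beyond what is already contained in Theorem \ref{thm:unif_perf_improving}: the reduction above is immediate, and the only mild technical point — dealing with radii $r$ lying outside the range in which the Ahlfors bounds \eqref{eq:ahlfors} are phrased — is internal to Lemma \ref{lem:Ahlfors_is_unif_perf}, is handled using the compactness of $\spt\nu$ together with the Ahlfors estimates, and does not affect the stated dependence of the constants. Thus the genuine content, as well as the genuine obstacle, lies in Theorem \ref{thm:unif_perf_improving} rather than in its corollary.
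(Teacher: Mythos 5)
Your proposal matches the paper's argument exactly: Corollary~\ref{cor:DZ} is obtained by feeding Lemma~\ref{lem:Ahlfors_is_unif_perf} (Ahlfors regularity with constant $C$ and exponent $\alpha$ implies $(N,\gamma)$-uniform perfectness with $N,\gamma$ depending only on $C,\alpha$) into Theorem~\ref{thm:unif_perf_improving}. Your sketch of the lemma handles arbitrary centres directly by passing to a nearby support point, whereas the paper first verifies the inequality for $x\in\spt\nu$ and then invokes its general remark that this suffices at the cost of replacing $N$ by $2N+1$, but this is only a cosmetic difference in the bookkeeping.
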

This improves upon an earlier result of Dyatlov and Zahl, namely \cite[Theorem 6]{DyatlovZahl2016}, asserting that if $\mu$ is Ahlfors $\alpha$-regular, with $ \alpha \in (0,1)$ and $C\geq1$, then
\[
 D(\mu \ast \mu,2) \geq D(\mu,2) + \eps
\]
where $\eps>0$ depends on $\alpha$ and $C$. We note that $D(\mu,2)=\alpha$ by Ahlfors-regularity, and hence $D(\mu,2)$ is bounded away from $1$.

We deduce Theorem \ref{thm:unif_perf_improving} from an inverse theorem for the $L^q$ norm of convolutions from \cite{Shmerkin2018}, see \S\ref{subsec:inverse-thm} below. The statement of this inverse theorem is fairly technical and one of the motivations of this work is to demonstrate its usefulness through several applications.

We remark that Hochman's inverse theorem for entropy \cite[Theorem 2.7]{Hochman2014} can be used to obtain similar statements for the entropy, and even Hausdorff, dimension of measures. Since $L^q$ dimensions are smaller or equal than both Hausdorff and entropy dimensions, our results cannot be derived from Hochman's theorem in any simple way, and are in many cases stronger. For example, our Corollary \ref{cor:unif_perf_convolution_to_one} immediately implies that if $\{\mu_i\}_{i\in\N}$ is a family of $(N,\gamma)$-uniformly perfect measures, then the Hausdorff dimension of $\ast_{i=1}^n \mu_i$ converges to $1$ at a rate that depends only on $N$ and $\gamma$.

In Section \ref{sec:preliminaries} we introduce notation and recall our main tool, the inverse theorem from \cite{Shmerkin2018}. In Section \ref{sec:proofs} we prove Theorem \ref{thm:unif_perf_improving} as a consequence of a more quantitative result, prove a dual statement for measures with porous supports, and deduce consequences for repeated convolutions. In Section \ref{sec:applications} we present classes of measures to which our results apply. Finally, the relationship with the dimension of sumsets is discussed in Section \ref{sec:sets}.

\section{Preliminaries}
\label{sec:preliminaries}

\subsection{Discretizations}

We introduce some notation. A set $A$ contained in $2^{-m}\Z\cap [0,1)$ is called a \emph{$2^{-m}$-set}. A probability measure is called a  \emph{$2^{-m}$-measure} if it is supported on a $2^{-m}$-set. We will study measures via their discretizations. For $\mu\in\PP(\R)$ we define the level-$m$ discretization of $\mu$ by collapsing the mass of every interval $Q\in\DD_m$ to its left endpoint, that is,
\[
 \mu^{(m)}(k 2^{-m}) = \mu\left( [k 2^{-m},(k+1)2^{-m}) \right)
\]
for all $k\in\Z$. Note that if $\mu\in \PP( [0,1) )$ then $\mu^{(m)}\in\PP( [0,1) )$ is a $2^{-m}$-measure, and $\mu^{(m)}(Q) = \mu(Q)$ for all $Q\in\DD_n$ and $n\leq m$.
Now we can equivalently set the definition of $L^q$ dimension as
\[
 D(\mu,q) = \liminf_{m\to\infty} \frac{ -\log \| \mu^{(m)} \|_q^q }{ (q-1)m },
\]
where for a finitely supported measure $\nu$ we define its $L^q$ norm  $\| \nu \|_q$ as
\[
 \| \nu \|_q^q =  \sum_{x\in\spt\nu} \nu(x)^q .
\]
The minimal value for $\|\mu\|_q$ among all $2^{-m}$-measures is $2^{-m/q'}$, where $q'=q/(q-1)$ is the dual exponent, and this is attained only by the uniform distribution on $2^{m}$ points.

We recall a useful Lemma saying that the order in which discretization and convolution are performed does not affect the $L^q$ norm too much. For the proof, see e.g. \cite[Lemma 4.3]{Shmerkin2018}.
\begin{lemma}
 \label{lem:discret_convolution}
  For any $q\in[1,\infty)$, there is a constant $C_q$, so that
  \[
    C_q^{-1} \|\mu^{(m)}\ast\nu^{(m)}\|^q_q \leq \|(\mu\ast\nu)^{(m)}\|^q_q \leq C_q \|\mu^{(m)}\ast\nu^{(m)}\|^q_q.
  \]
  holds for all $\mu,\nu\in\PP([0,1))$ and $m\in\N$.
\end{lemma}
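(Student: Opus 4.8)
The plan is to realise both measures appearing in the statement as push-forwards of the product measure $\mu\times\nu$ under integer-rounding maps, and to observe that these two maps differ only by whether one rounds to the dyadic grid before or after adding the two coordinates --- a difference that, on the line, moves mass by at most one dyadic cell.

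Concretely, I would start from the identity, valid for every $k\in\Z$,
\[
 (\mu\ast\nu)^{(m)}(\{k2^{-m}\}) = (\mu\ast\nu)\big([k2^{-m},(k+1)2^{-m})\big) = (\mu\times\nu)\big(\{(s,t): \lfloor 2^m(s+t)\rfloor = k\}\big),
\]
which is immediate from the definition of $\mu\ast\nu$ as the push-forward of $\mu\times\nu$ under addition, together with
\[
 (\mu^{(m)}\ast\nu^{(m)})(\{k2^{-m}\}) = \sum_{l\in\Z}\mu^{(m)}(\{l2^{-m}\})\,\nu^{(m)}(\{(k-l)2^{-m}\}) = (\mu\times\nu)\big(\{(s,t): \lfloor 2^m s\rfloor + \lfloor 2^m t\rfloor = k\}\big),
\]
where the last equality uses $\mu^{(m)}(\{l2^{-m}\}) = \mu(\{s:\lfloor 2^m s\rfloor = l\})$ (and likewise for $\nu$) and that $\mu\times\nu$ is a product measure; the hypothesis $\mu,\nu\in\PP([0,1))$ serves only to keep all sums finite. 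The elementary fact that makes everything work is $\lfloor x\rfloor+\lfloor y\rfloor \le \lfloor x+y\rfloor \le \lfloor x\rfloor+\lfloor y\rfloor + 1$ for all $x,y\in\R$, so that the two integer-valued functions $(s,t)\mapsto\lfloor 2^m(s+t)\rfloor$ and $(s,t)\mapsto\lfloor 2^m s\rfloor+\lfloor 2^m t\rfloor$ differ by $0$ or $1$ at every point.

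Writing $a_k = (\mu^{(m)}\ast\nu^{(m)})(\{k2^{-m}\})$ and $b_k = (\mu\ast\nu)^{(m)}(\{k2^{-m}\})$, the two displays and the floor inequality give the pointwise bounds
\[
 b_k \le a_{k-1}+a_k \qquad\text{and}\qquad a_k \le b_k + b_{k+1} \qquad(k\in\Z),
\]
since, for example, $\{\lfloor 2^m(s+t)\rfloor = k\}$ is contained in $\{\lfloor 2^m s\rfloor+\lfloor 2^m t\rfloor = k-1\}\cup\{\lfloor 2^m s\rfloor+\lfloor 2^m t\rfloor = k\}$. Using $(x+y)^q \le 2^{q-1}(x^q+y^q)$ for $x,y\ge 0$ and $q\ge 1$, and summing over $k$ --- so that each $a_k^q$, respectively $b_k^q$, is counted at most twice --- I obtain
\[
 \|(\mu\ast\nu)^{(m)}\|_q^q = \sum_k b_k^q \le 2^{q-1}\sum_k(a_{k-1}^q+a_k^q) = 2^{q}\sum_k a_k^q = 2^q\,\|\mu^{(m)}\ast\nu^{(m)}\|_q^q,
\]
and symmetrically $\|\mu^{(m)}\ast\nu^{(m)}\|_q^q \le 2^q\,\|(\mu\ast\nu)^{(m)}\|_q^q$; hence the lemma holds with $C_q = 2^q$.

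There is no serious obstacle here. The only points that need care are getting the second push-forward identity right --- i.e. recognising that discretising and then convolving corresponds exactly to the event $\{\lfloor 2^m s\rfloor + \lfloor 2^m t\rfloor = k\}$ --- and the bookkeeping in the telescoping sum. The fact that $\lfloor x+y\rfloor-\lfloor x\rfloor-\lfloor y\rfloor\in\{0,1\}$ is precisely what makes $C_q$ independent of $m$ and of $\mu,\nu$; in $\R^d$ the discrepancy lies in $\{0,1\}^d$, spreading each atom over $2^d$ cells and producing a constant depending also on $d$, consistent with the lemma being stated on the line.
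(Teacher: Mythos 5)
Your proof is correct and gives the explicit constant $C_q = 2^q$. The paper delegates this lemma to \cite[Lemma 4.3]{Shmerkin2018}, but the underlying mechanism is the same as yours: rounding to the $2^{-m}$-grid before versus after adding moves mass by at most one dyadic cell, which you encode cleanly via $\lfloor x+y\rfloor-\lfloor x\rfloor-\lfloor y\rfloor\in\{0,1\}$, and such a bounded shift changes the $L^q$ norm by at most a multiplicative constant via the convexity bound $(x+y)^q\le 2^{q-1}(x^q+y^q)$. Your pointwise inequalities $b_k\le a_{k-1}+a_k$ and $a_k\le b_k+b_{k+1}$, the power-mean step, and the double-counting in the telescoping sum are all correct, so the argument is complete and self-contained.
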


\subsection{An inverse theorem for the $L^q$ norm of convolutions}
\label{subsec:inverse-thm}

A crucial tool in our analysis is the inverse theorem \cite[Theorem 2.1]{Shmerkin2018}. To state the theorem, we first give some convenient definitions:

Given a set $A\subset\R$, we denote the intervals in $\mathcal{D}_n$ that intersect $A$ by $\mathcal{D}_n(A)$, and denote its cardinality by $N_n(A)=|\mathcal{D}_n(A)|$. For an interval $I$ and $R>0$, we let $R I$ be the interval with the same centre as $I$ and length $R$ times that of $I$.

Let $D,\ell$ be large integers and write $m=D\ell$. A $2^{-m}$-set $A$ is called \emph{$(D,\ell,R_s)$-uniform}, where $(R_s)_{s\in[\ell]}$ is a sequence of integers in $[1,2^D]$, if $N_{(s+1)D}( A\cap J) = R_s$ for all $J\in \mathcal{D}_{sD}( A )$. Here, and throughout the paper $[\ell]$ stands for the set of integers $\{0,1,\ldots,\ell-1\}$. One can picture this definition using trees: if one considers the tree whose vertices of level $s$ are the intervals in $\mathcal{D}_{s D}(A)$ (with descendence given by inclusion), then $A$ is $(D,\ell,R_s)$-uniform if in the associated tree each vertex of level $s$ has exactly $R_s$ descendants.  For this reason we sometimes refer to the values $R_s$ as the  \emph{branching numbers} of $A$. The set $A$ is said to be \emph{uniform} if there are numbers $D,\ell$ and $(R_s)_{s\in[\ell]}$ so that $A$ is $(D,\ell,R_s)$-uniform. We emphasize that uniformity does not say anything about the distribution of the subintervals $I$ inside $J$, only about their cardinality.

\begin{theorem}
\label{thm:inverse}
For each $q>1$, $\delta>0$, and $D_0\in\N$, there are $D \geq D_0$ and $\eps > 0$, so that the following holds for $\ell\ge \ell_0(q,\delta,D_0)$.

Let $m=\ell D$ and let $\mu$ and $\nu$ be $2^{-m}$-measures with
\[
 \|\mu \ast \nu\|_q \geq 2^{-\eps m} \|\mu\|_q.
\]
Then there exist $2^{-m}$-sets $A\subset \spt\mu$ and $B\subset\spt\nu$, numbers $k_A, k_B\in 2^{-m}\Z$, and a set $\mathcal{S}\subset[\ell]$, so that
\begin{labeledlist}{A}
 \item\label{A1} $\| \mu|_A \|_q \geq 2^{-\delta m}\|\mu\|_q$.
 \item\label{A2} $\mu(x) \leq 2 \mu(y)$ for all $x,y\in A$.
 \item\label{A3} $A'=A+k_A$ is contained in $[0,1)$ and is $(D,\ell,R')$ uniform for some sequence $R'$.
 \item\label{A4} $x\in \frac{1}{2}\mathcal{D}_{sD}(x)$ for each $x\in A'$ and $s\in [\ell]$.
\end{labeledlist}
\begin{labeledlist}{B}
 \item\label{B1} $\| \nu|_B \|_1 = \nu(B) \geq 2^{-\delta m}$.
 \item\label{B2} $\nu^{(m)}(x) \leq 2 \nu(y)$ for all $x,y\in B$.
 \item\label{B3} $B'=B+k_B$ is contained in $[0,1)$ and is $(D,\ell,R'')$ uniform for some sequence $R''$.
 \item\label{B4} $y\in \frac{1}{2}\mathcal{D}_{sD}(y)$ for each $y\in B'$ and $s\in [\ell]$.
\end{labeledlist}
Moreover
\begin{labeledlist}{}
\setcounter{enumi}{4}
 \item\label{inverse5} for each $s$, $R''_s=1$ if $s\not\in\mathcal{S}$, and $R'_s \geq 2^{(1-\delta)D}$ if $s\in\mathcal{S}$.
 \item\label{inverse6} The set $\mathcal{S}$ satisfies
 \[
  \log \|\nu\|^{-q'}_q - m\delta \leq D|\mathcal{S}| \leq \log \|\mu\|^{-q'}_q + m\delta.
 \]
\end{labeledlist}
\end{theorem}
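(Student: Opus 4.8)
The plan is to establish this inverse theorem following \cite[Theorem~2.1]{Shmerkin2018}, by a multiscale argument in the spirit of Hochman's inverse theorem for entropy \cite{Hochman2014} but carried out for the $L^q$ norm. Throughout, the parameters are chosen in the order: $q,\delta,D_0$ given, then $D$ taken large, then $\eps$ taken small, then $\ell_0$ taken large. Taking $D$ large compared with $1/\delta$ makes room for the many losses of relative size $2^{-O(\ell)}$ or $2^{-O(\ell\log D)}$ incurred below, which must fit inside the $2^{-\delta m}=2^{-\delta D\ell}$ budget; taking $\eps$ small compared with $1/D$ turns the hypothesis $\|\mu\ast\nu\|_q\ge 2^{-\eps m}\|\mu\|_q$ into a rigid constraint across all $\ell$ blocks, since a mechanism producing $\Omega(1)$ extra branching of $\mu\ast\nu$ in each of $\Omega(\ell)$ blocks already makes $\|\mu\ast\nu\|_q\le 2^{-\Omega(\ell/q')}\|\mu\|_q<2^{-\eps m}\|\mu\|_q$.

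\emph{Step 1 (uniformization).} First I would reduce to flat, uniform $\mu$ and $\nu$. Partition $\spt\mu$ by associating to each $x$ the sequence of dyadic branching buckets $\big\lfloor\log N_{(s+1)D}(\spt\mu\cap\mathcal{D}_{sD}(x))\big\rfloor_{s\in[\ell]}$ together with the dyadic bucket of $\mu(x)$; read from the top down, each class supports a flat, ``dyadically uniform'' sub-measure of $\mu$ (a common branching bucket and all branches present at every level), and there are only $2^{O(\ell\log D)}=2^{o(m)}$ classes. Since $\mu\ast\nu=\sum_C(\mu|_C)\ast\nu$, the triangle inequality in $L^q$ selects a class $\mu_1\le\mu$ with $\|\mu_1\ast\nu\|_q\ge 2^{-o(m)}\|\mu\ast\nu\|_q\ge 2^{-o(m)-\eps m}\|\mu_1\|_q$ (using $\|\mu_1\|_q\le\|\mu\|_q$), and repeating for $\nu$ gives $\nu_1\le\nu$ with $\|\nu_1\|_1\ge 2^{-o(m)-\eps m}$ by Young's inequality. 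Sharpening dyadic uniformity to genuine uniformity, and imposing the middle-half conditions \ref{A4} and \ref{B4} by a random translation (a device preventing carries in base-$2$ addition, so that $\mathcal{D}_{sD}$ interacts cleanly with the addition map), each cost only a $2^{-O(\ell)}\ge 2^{-\delta m}$ fraction; so we may assume $\mu,\nu$ are flat and uniform with branching sequences $(R'_s),(R''_s)$, whence $\log\|\mu\|_q^{-q'}=\sum_s\log R'_s$ and $\log\|\nu\|_q^{-q'}=\sum_s\log R''_s$.

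\emph{Step 2 (multiscale analysis).} A ``multiscale inequality'' bounds $\log\|\mu\ast\nu\|_q^{-q'}$ from below by $\sum_s\beta_s$, where $\beta_s\ge\log R'_s$ is the branching produced by the convolution in block $s$ (the inequality $\beta_s\ge\log R'_s$ is where the middle-half property enters); with the hypothesis this forces $\sum_s(\beta_s-\log R'_s)=O(\eps m)$, so all but a small fraction of the blocks are essentially non-smoothing. The combinatorial core is then a single-scale inverse theorem: if $\mu_J,\nu_J$ are flat uniform $2^{-D}$-measures with $\|\mu_J\ast\nu_J\|_q\ge 2^{-\eps'D}\|\mu_J\|_q$, then after discarding negligible parts there is a scale $D'\le D$ with either $\nu_J$ contained in one interval of $\mathcal{D}_{D'}$ or $\mu_J$ meeting all of them; this is an $L^q$-adapted, dyadically discretized version of the Balog--Szemer\'edi--Gowers/Freiman circle of ideas, most transparent when $q$ is an even integer, where $\|\cdot\|_q^q$ is literally an additive-energy count. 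Applying it in each near-non-smoothing block and recursing on the scales $D'$ it produces (the recursion terminating at bounded scale, where the dichotomy is immediate), while using the accumulation principle above to rule out the intermediate configurations that would otherwise persist, one arrives at: after pruning $\nu$ to a single branch on every block where it branches only mildly or where $\mu$ fails to branch nearly fully (the total pruned branching being at most $O(\eps m/\delta)+O(\delta m)\le\delta m$), on every remaining block where $\nu$ branches, $\mu$ branches nearly fully, $R'_s\ge 2^{(1-\delta)D}$.

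\emph{Step 3 (extraction), and the main obstacle.} Taking $\mathcal{S}=\{s:R''_s\ge 2\}$ after the pruning gives property \ref{inverse5}; passing to exactly-uniform sub-measures $\mu|_A\le\mu_1$ and $\nu|_B\le\nu_1$ realizing the branching numbers just obtained yields \ref{A1}--\ref{A4} and \ref{B1}--\ref{B4}, all flatness and mass bounds surviving because every discarded piece was a $2^{-O(\ell)}\ge 2^{-\delta m}$ fraction. For property \ref{inverse6}, on one side $\log\|\nu|_B\|_q^{-q'}=\sum_{s\in\mathcal{S}}\log R''_s\le D|\mathcal{S}|$ since $R''_s\le 2^D$, and on the other $\log\|\mu|_A\|_q^{-q'}=\sum_s\log R'_s\ge(1-\delta)D|\mathcal{S}|$ by \ref{inverse5}; together with \ref{A1}, \ref{B1} and $\nu|_B\le\nu$ these yield both inequalities up to the permitted additive $m\delta$. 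I expect the single-scale inverse theorem together with its interaction with the recursion to be the main obstacle: a crude sumset bound only gives $\beta_s\ge\log R'_s$, so it cannot by itself force $\mu$ to be \emph{nearly full} wherever $\nu$ branches, and the near-extremal, arithmetic-progression-like configurations must be unwound by passing to the scale at which they reveal their structure. The secondary obstacle is the bookkeeping that keeps the entire collection of small-parameter losses below $2^{-\delta m}$, which is precisely what dictates the order in which $D$, $\eps$ and $\ell_0$ are chosen.
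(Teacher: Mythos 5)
The paper does not prove this theorem --- it is quoted from \cite[Theorem 2.1]{Shmerkin2018}, and the paper's only contribution is the remark that follows the statement, which handles the passage from the circle-$[0,1)$ version in \cite{Shmerkin2018} (where convolution and translation are taken mod $1$) to the real-line version stated here, via \cite[Lemma 3.8]{Shmerkin2018} for the translation and \cite[Lemma 4.2]{Shmerkin2018} for the comparability of the $L^q$ norms of the circle and real-line convolutions. Your proposal is therefore aimed at a different target: a from-scratch proof of the inverse theorem itself, which \cite{Shmerkin2018} carries out over many pages and which the present paper deliberately treats as a black box.

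As a sketch of that external proof, your outline is in the right spirit --- uniformize, exploit a multiscale inequality, prune $\nu$ to a single branch on blocks where $\mu$ fails to branch nearly fully --- and you correctly locate the two genuine difficulties: the single-scale inverse mechanism (which in \cite{Shmerkin2018} rests on a dedicated asymmetric $L^q$-flattening lemma built on additive-combinatorial input, not a direct BSG/Freiman reduction, precisely because, as you observe, a crude sumset bound cannot by itself force near-full branching of $\mu$ where $\nu$ branches) and the bookkeeping that keeps the accumulated losses below $2^{-\delta m}$. But measured against what the paper actually establishes, your proposal omits the one step the paper does prove (the circle-to-line transfer) and leaves the hard steps of the cited argument at the level of a plan rather than a proof, so it cannot stand in for either the citation or the remark.
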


\begin{remark}
In \cite[Theorem 2.1]{Shmerkin2018}, both the convolution and the translation take place on the circle $[0,1)$ with addition modulo $1$. The translation is given by \cite[Lemma 3.8]{Shmerkin2018}, and it follows from the construction in this lemma that $A+x_A\subset [0,1)$ as subsets of the real line. Note that the convolution $\mu*\nu$ is in general different if taken on the circle rather than the real line, but each can be obtained from the other by dividing the resulting measure into two pieces and translating one of them. From this fact it is easy to see that the $L^q$ norms of both convolutions differ by at most a multiplicative constant (this follows e.g. from \cite[Lemma 4.2]{Shmerkin2018}).
\end{remark}

\section{Improvement of $L^q$ dimension under convolution}
\label{sec:proofs}

\subsection{Proof of Theorem \ref{thm:unif_perf_improving}}

In this section we obtain Theorem \ref{thm:unif_perf_improving}. We begin with some observations about uniformly perfect measures which will be used throughout the paper. It is equivalent to assume that \eqref{eq:unif_perf} holds only for $x\in\spt\nu$ (at the price of changing the value of $N$). Indeed, \eqref{eq:unif_perf} holds trivially if  $\nu(B(x,r))=0$; otherwise, pick $y\in B(x,r)\cap \spt\nu$ and note that
\[
\nu(B(x,(2N+1)r)) \ge \nu(B(y,2N r)) \ge 2^\gamma \nu(B(y, 2r)) \ge 2^\gamma\nu(B(x,r)).
\]
In particular, the support of a uniformly perfect measure is a uniformly perfect set. For more on the relationship between uniformly perfect sets and measures, see \S\ref{sec:sets}.

If $\nu$ is a $2^{-m}$-measure, then we say that $\nu$ is $(N,\gamma)$-uniformly perfect if \eqref{eq:unif_perf} holds for all $r\ge 2^{-m}$ (for this condition to be meaningful we need $2^m\gg N$). It will be important for us later that if $\nu$ is $(N,\gamma)$-uniformly perfect, then $\nu^{(m)}$ is $(2N+1,\gamma)$-uniformly perfect which holds since (for $r\ge 2^{-m}$)
\[
\nu^{(m)}(B(x, (2N+1)r)) \ge \nu(B(x,2N r)) \ge 2^\gamma \nu(B(x,2r)) \ge 2^\gamma \nu^{(m)}(B(x,r)).
\]

Theorem \ref{thm:unif_perf_improving} will be obtained as a corollary of the following more quantitative result:
\begin{proposition}
 \label{prop:lq_norm_unif_perf}
Given $q > 1$, $N > 1$ and $\gamma,\eta,a > 0$, there is $\eps = \eps(q,N,\gamma,\eta)>0$, such that the following holds for all large enough $m=m(q,N,\gamma,\eta,a)$:

Let $\nu\in\PP(\R)$ be a $(2^N,\gamma)$-uniformly perfect $2^{-m}$-measure whose support has diameter $\ge a$. Then
 \[
  \|\mu\ast\nu\|_q < 2^{-\eps m} \|\mu\|_q,
 \]
 for all $2^m$-measures $\mu$ with $\|\mu\|_q > 2^{-m(1-\eta)/q'}$.
\end{proposition}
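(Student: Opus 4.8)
The plan is to argue by contradiction using the inverse theorem, Theorem~\ref{thm:inverse}. Fix $q,N,\gamma,\eta,a$. Choose first a small $\delta=\delta(N,\gamma,\eta)>0$ and a large integer $D_0=D_0(N,\gamma)$; concretely $\delta=\gamma\eta/(4(N+\gamma))$, and $D_0$ large enough (in terms of $N,\gamma$) that the scale‑by‑scale estimate in the last paragraph holds. Apply Theorem~\ref{thm:inverse} with these $q,\delta,D_0$ to obtain $D\ge D_0$, $\eps>0$ and $\ell_0$; this $\eps$, which depends only on $q,N,\gamma,\eta$, will be the one in the statement, and we take $m=D\ell$ for $\ell\ge\ell_0$ large (and large also in terms of $a$ and $D$). (It suffices to treat $m$ divisible by $D$; the general case follows by discretising at the largest multiple of $D$ below $m$ and invoking Lemma~\ref{lem:discret_convolution}.) Suppose, for contradiction, that $\|\mu\ast\nu\|_q\ge 2^{-\eps m}\|\mu\|_q$ for some $\mu$ as in the statement. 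Then Theorem~\ref{thm:inverse} yields a set $B\subset\spt\nu$, a number $k_B\in 2^{-m}\Z$ with $B':=B+k_B\subset[0,1)$ a $(D,\ell,R'')$-uniform set, and $\SSS\subset[\ell]$, such that $\nu(B)\ge 2^{-\delta m}$, $R''_s=1$ whenever $s\notin\SSS$, and (combining $\|\mu\|_q>2^{-m(1-\eta)/q'}$ with the bound $D|\SSS|\le\log\|\mu\|_q^{-q'}+m\delta$) $|\SSS|<\ell(1-\eta+\delta)$. Only these three consequences will be used.

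Pass to $\nu'$, the translate of $\nu$ by $k_B$. Since uniform perfectness and the diameter of the support are translation invariant, $\nu'$ is again a $(2^N,\gamma)$-uniformly perfect $2^{-m}$-measure with $\diam\spt\nu'\ge a$; moreover $B'\subset\spt\nu'$ and $\nu'(B')=\nu(B)\ge 2^{-\delta m}$. For $0\le s\le\ell$ let $U_s$ be the closed $2^{-sD}$-neighbourhood of $B'$ and set $G_s=\nu'(U_s)$. Then $G_s$ is nonincreasing in $s$, $G_0\le 1$, and $G_\ell\ge\nu'(B')\ge 2^{-\delta m}$. Call a scale $s\in[\ell]$ \emph{usable} if $s\notin\SSS$ and $2^{-sD}<a/6$. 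The crux of the proof is the estimate $G_{s+1}\le 2^{-\gamma D/(2N)}G_s$ for every usable $s$. Granting it, note that the number of non-usable scales is at most $|\SSS|+O_a(1)$, the $O_a(1)$ accounting for the scales with $2^{-sD}\ge a/6$; hence there are at least $\ell(\eta-\delta)-O_a(1)\ge\ell(\eta-2\delta)$ usable scales once $\ell$ is large in terms of $a$. Combining the monotonicity of $G_s$ with the estimate over the usable scales gives $2^{-\delta m}\le G_\ell\le 2^{-(\gamma D/(2N))\,\ell(\eta-2\delta)}$, i.e.\ (since $m=D\ell$) $\delta\ge\tfrac{\gamma}{2N}(\eta-2\delta)$, which contradicts $\delta=\gamma\eta/(4(N+\gamma))$. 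This is the desired contradiction.

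It remains to prove the displayed estimate, which is the main point. Fix a usable $s$. Since $s\notin\SSS$, uniformity of $B'$ forces, for every $I\in\DD_{sD}(B')$, that $B'\cap I$ lies in a single interval $I'\in\DD_{(s+1)D}$. Hence $U_{s+1}$ is covered by the $2^{-(s+1)D}$-neighbourhoods $\widehat I$ of these $I'$, each an interval of length $3\cdot 2^{-(s+1)D}$; fixing $x_I\in B'\cap I\subset I'$ we get $\widehat I\subset B(x_I,3\cdot 2^{-(s+1)D})$. Now apply \eqref{eq:unif_perf} to $\nu'$ about $D/N$ times to zoom the radius up by a factor $\approx 2^{D}$ while multiplying the measure by at least $\approx 2^{\gamma D/N}$; this is legitimate because usability of $s$ keeps all the balls involved of diameter $<a\le\diam\spt\nu'$, so $\spt\nu'$ is never engulfed, and it gives $\nu'(\widehat I)\lesssim 2^{-\gamma D/N}\,\nu'\big(B(x_I,2^{-sD})\big)$. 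Finally the $x_I$ lie in distinct intervals of $\DD_{sD}$, so the balls $B(x_I,2^{-sD})$ overlap with multiplicity at most $3$ and are contained in $U_s$; therefore $\sum_{I\in\DD_{sD}(B')}\nu'\big(B(x_I,2^{-sD})\big)\le 3\,G_s$. Altogether $G_{s+1}\le\sum_I\nu'(\widehat I)\lesssim 2^{-\gamma D/N}G_s\le 2^{-\gamma D/(2N)}G_s$, the last step holding by the choice of $D_0$.

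I expect this last paragraph — making the recursion for $G_s$ close — to be the delicate part. Two points require care: one must work with a genuine \emph{neighbourhood} $U_s$ of $B'$ rather than with $\sum_{I\in\DD_{sD}(B')}\nu'(I)$, for otherwise the $\nu'$-mass sitting in dyadic neighbours of $B'$ that themselves miss $B'$ is uncontrolled and the recursion fails; and one must isolate the ``usable'' scales so that the iterated application of \eqref{eq:unif_perf} never swallows $\spt\nu'$ — this is exactly where the hypothesis $\diam\spt\nu\ge a$ enters, and why the admissible threshold on $m$ depends on $a$. Once the scale estimate is in hand, the outer accounting is routine, the key being that $\delta$ is a fixed small multiple of $\gamma\eta/N$ chosen \emph{independently of $\ell$}, so that the per-scale losses, accumulated over the $\gtrsim\eta\ell$ usable scales, overwhelm the global lower bound $\nu'(B')\ge 2^{-\delta m}$.
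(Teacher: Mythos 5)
Your proof is correct and follows the same overall strategy as the paper: assume for contradiction that $\|\mu\ast\nu\|_q\ge 2^{-\eps m}\|\mu\|_q$, apply the inverse theorem, use $\|\mu\|_q>2^{-m(1-\eta)/q'}$ together with~\ref{inverse6} to bound $|\SSS|$ away from $\ell$, and then show that uniform perfectness forces $\nu$-mass near $B$ to decay by a fixed exponential factor at each non-branching scale that is small relative to $a$, contradicting $\nu(B)\ge 2^{-\delta m}$ from~\ref{B1}. Where you genuinely diverge is in how the per-scale decay is proved. The paper works entirely within the dyadic tree: it invokes~\ref{B4} to guarantee that the unique child $I\in\DD_{(s+1)D}(B)$ sits in $\tfrac12 J$, so that the iterated $2^N$-dilations $I\subset 2^N I\subset\cdots$ remain inside $J$, giving $\nu(I)\le 2^{-\gamma\lfloor (D-1)/N\rfloor}\nu(J)$ directly; multiplying these bounds along the leaf-to-root chains yields $\nu(B)\le 2^{-\gamma\lfloor (D-1)/N\rfloor|L|}$. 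You instead track the quantities $G_s=\nu'(U_s)$ where $U_s$ is a genuine $2^{-sD}$-neighbourhood, cover $U_{s+1}$ by neighbourhoods $\widehat I$ of the single child intervals, and relate $\nu'(\widehat I)$ to $\nu'(B(x_I,2^{-sD}))$ via iterated uniform perfectness centred at a point $x_I\in B'$; a bounded-multiplicity argument then sums these over $I\in\DD_{sD}(B')$. This has the advantage of dispensing entirely with~\ref{B4}: you never need the child to sit in the middle of its parent, since you are expanding balls around a point of $B'$ and the only thing you must avoid is engulfing $\spt\nu'$, which you handle via the $2^{-sD}<a/6$ threshold. The cost is the extra covering/multiplicity bookkeeping (the constant $3$ and the loss $C_{N,\gamma}$ in the iteration), which forces $D_0$ to be taken large enough to absorb those constants — you correctly flag this. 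Your numerical choice $\delta=\gamma\eta/(4(N+\gamma))$ differs from the paper's $\delta=\eta\gamma/(5N)$, but both are just fixed small multiples of $\gamma\eta/N$ chosen to close the final inequality, and your final accounting is arithmetically sound.

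One point to tighten in a final write-up: the reduction to $m$ divisible by $D$ is stated in one line, but as in the paper one should note that $\nu^{(D\ell)}$ remains uniformly perfect (with a slightly worse constant that still depends only on $N$), so the $\eps$ produced for the $m=D\ell$ case depends on the right parameters; otherwise the reduction is as claimed.
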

\begin{proof}

Let $\delta= \eta\gamma / (5N)$ (so that, in particular, $\delta < \eta/2$), and $D_0 = \lceil 2N+2 \rceil$. Let $\eps>0$ and $D\in\N$ be given by Theorem \ref{thm:inverse} for $\delta$ and $D_0$, and let $\ell_1=\ell_1(\delta,D_0)$ be such that the conclusions of the theorem hold for $\ell\ge\ell_1$.

We assume first that $m=D\ell$ for some $\ell\ge \ell_1$ and comment at the end how to extend the claim to general $m$. Fix, then $m=D\ell\ge D\ell_1$ and let $\mu$ and $\nu$ be as in the statement.  Assume, for the sake of contradiction, that
\begin{equation}
 \label{eq:contra_1}
 \|\mu \ast \nu\|_q \geq 2^{-\eps m} \|\mu\|_q.
\end{equation}
Applying  Theorem \ref{thm:inverse}, we get uniform $2^{-m}$-sets $A\subset\spt\mu$ and $B\subset\spt\nu$, and a set $\SSS\subset[\ell]$ corresponding to the scales of almost full branching of $A$. By replacing $\mu$ and $\nu$ by their translations by $k_A, k_B$ respectively, which does not change any of the claims we make about them, we may and do assume that $k_A=k_B=0$.

Since $\|\mu\|_q \geq 2^{-m(1-\eta)/q'}$, part \ref{inverse6} of Theorem \ref{thm:inverse} gives
\begin{equation}
 \label{eq:S_bound}
  D|\SSS|
 \leq
 \log ( \|\mu\|_q^{-q'} ) + \delta m
  \leq
 (1-\eta+\delta) m
  \leq
 (1-\eta/2) D\ell.
\end{equation}
Recall that $D \geq D_0 \geq N$. Consider $s\in[\ell]$, and intervals $J\in\DD_{sD}$ and $I\in\DD_{(s+1)D}(\spt\nu)$ such that $I\subset\frac{1}{2}J$ and $\spt\nu\not\subset J$. Set $I^0=I$, and let $I^1 = 2^{N}I^0$. By the uniform perfectness of $\nu$ , we have that $\nu(I^0) \leq 2^{-\gamma} \nu( I^1 )$.  We continue inductively until we reach the largest $K\in\N$ for which
\[
 2^{NK} 2^{-(s+1)D} = \diam I^{K} \leq \frac{1}{2} \diam J = 2^{-sD-1}.
\]
That is, $K = \big\lfloor \frac{D-1}{N} \big\rfloor$. Since $2^{NK} I\subset J$, we have now gotten that
\begin{equation}
 \label{eq:measure_cut}
 \nu(I) \leq 2^{-\gamma \lfloor \frac{D-1}{N} \rfloor} \nu(J)
\end{equation}
for all $J\in\DD_{sD}$ and $I\in\DD_{(s+1)D}$ such that $I\in\frac{1}{2}J$ and $\spt\nu\not\subset J$.

Let $\ell_0$ be the smallest integer such that $2^{-\ell_0}<a\le \diam(\spt\nu)$. By making $\ell_1$ larger in terms of $a$ and $\eta$, we may assume that $\ell_1>10\ell_0/\eta$. Let $L\subset \{\ell_0,\ell_0+1,\ldots,\ell\}$ denote the scales where $B$ has no branching, i.e. the set of $s\in \{\ell_0,\ell_0+1,\ldots,\ell\}$ for which $R''_s=1$. By part \ref{inverse5} of Theorem \ref{thm:inverse}, \eqref{eq:S_bound} and our choice $\delta=\eta\gamma / (5N)$, we have that
\[
 |L|\geq \ell-|\mathcal{S}|-\ell_0 \geq \frac{\eta}{2}\ell -\ell_0 > 2 \delta \gamma^{-1} N \ell.
\]
Part \ref{B1} of the inverse theorem gives that $2^{-\delta m}\leq\nu(B)$. On the other hand \ref{B4} and our choice of $\ell_0$ allow us to use \eqref{eq:measure_cut} for each $I\in\DD_{(s+1)D}(B)$ with $s\in L$, so we get
\begin{align*}
 2^{-\delta m}
 &\leq
 \nu(B)
 \leq
 2^{-\gamma \lfloor \frac{D-1}{N} \rfloor|L|}.
\end{align*}
The last two displayed equations yield
\[
D\ge 2\left\lfloor \frac{D-1}{N}\right\rfloor N > 2(D-1) - 2N
\]
which, since we assumed $D\ge D_0 \ge 2N+2$, is a contradiction. This finishes the proof, under the assumption that $m=D\ell$.

In the general case, take $m=D\ell +j$ with $j\in\{0,1,\ldots,D-1\}$ and $\ell\ge \ell_1$. If $\rho$ is a $2^{-m}$-measure then, decomposing
\[
\|\rho\|_q^q = \sum_{I\in\DD_{D\ell}} \rho(I)^q \sum_{J\in\DD_m, J\subset I} (\rho(J)/\rho(I))^q
\]
it follows that
\[
 \|\rho\|_q \le   \|\rho^{(D\ell)} \|_q \le 2^{(D-1)/q'}\|\rho\|_q .
\]
On the other hand, we have seen that if $\nu$ is $(N,\gamma)$-uniformly perfect, then $\nu^{(D\ell)}$ is $(2N+1,\gamma)$-uniformly perfect. Combining these facts with Lemma \ref{lem:discret_convolution} and the already established case $m=D\ell$, we conclude that
\begin{align*}
\| \mu*\nu\|_q &\le \| (\mu*\nu)^{(D\ell)}\|_q \le C_q \| \mu^{(D\ell)} * \nu^{(D\ell)}\|_q \\
&\le C_q 2^{-\e D\ell} \|\mu^{(D\ell)}\|_q \le C_q  2^{-\e D\ell} 2^{(D-1)/q'} \|\mu\|_q \\
&\le 2^{-\e m/2} \|\mu\|_q
\end{align*}
if $m$ is taken large enough in terms of $D$ and $q$, and hence in terms of the given parameters.
\end{proof}

\begin{proof}[Proof of Theorem \ref{thm:unif_perf_improving}]
Since scaling $\mu$ and $\nu$ by the same factor and translating them does not affect the statement, we may assume that $\mu$ and $\nu$ are both supported on $[0,1)$. Since $D(\mu,q) \le 1-\eta$, we must have
\[
\|\mu^{(m)}\|_q > 2^{-m(q-1)q^{-1}(1-\eta/2)} = 2^{-m(1-\eta/2)/q'}
\]
for infinitely many $m$. Since $\nu^{(m)}$ is $(2N+1,\gamma)$-uniformly perfect, Proposition \ref{prop:lq_norm_unif_perf} gives $\eps$ and $D$ (depending on $q,N,\gamma,\eta$) so that
\begin{equation}
 \label{eq:ahlfors_lqnorm}
 \| \nu^{(m)} \ast \mu^{(m)} \|_q < 2^{-\eps m} \| \mu^{(m)} \|_q
\end{equation}
for infinitely many $m=D\ell$. For the remaining large $m$, we use the estimate
\[
\| \mu^{(m)} \ast \nu^{(m)} \|_q \leq \|\mu^{(m)}\|_q \leq 2^{-m(q-1)q^{-1}(1-\eta/2)},
\]
where the first inequality follows from the convexity of $t\mapsto t^q$. Combining these estimates, and using Lemma \ref{lem:discret_convolution} we get
\[
 \| (\nu\ast\mu)^{(m)} \|_q^q
 \leq
 C_q \| \nu^{(m)} \ast \mu^{(m)} \|_q^q
 \leq
 C_q\min\left\{ 2^{-\eps q m} \| \mu^{(m)} \|_q^q \ ,\ 2^{-m(q-1)(1-\eta/2)} \right\}
\]
for all $m$ large enough. Thus
\begin{align*}
 \frac{ -\log \| (\nu\ast\mu)^{(m)} \|_q^q }{ (q-1)m }
 &\geq
 \min\left\{
 \frac{ -\log \left(C_q 2^{-\eps q m} \| \mu^{(m)} \|_q^q \right)}{ (q-1)m }\ ,\ \frac{ -\log\left( C_q 2^{-m(q-1)(1-\eta/2)}\right)  }{ (q-1)m }
 \right\}
\end{align*}
for all $m$ large enough. Taking liminf of both sides in the above inequality yields
\[
 D(\mu\ast\nu,q) \geq D(\mu,q) + \min\left\{ \eps\frac{ q }{ q-1 } \ ,\ \eta/2 \right\}
\]
ans so the proof is finished.
\end{proof}

\subsection{Porosity and the $L^q$ norm of convolutions}

Theorem \ref{thm:unif_perf_improving} says that if $\mu$ is uniformly perfect then $D(\mu*\nu,q)>D(\nu,q)$ unless $D(\nu,q)$ is already maximal. In
Theorem \ref{thm:revese_improving_porous} below we consider the dual problem of giving conditions on $\mu$ such that $D(\mu*\nu,q)>D(\mu,q)$. We will see that this holds whenever $\spt\mu$ is porous and $D(\nu,p)$ is positive for any $p$.

We now recall the notion of (lower) porosity. A set $A\subset\R$ is called $\alpha$-porous, if there is $\alpha>0$ so that for every $r>0$ and every $x\in \R$, there is $y\in\R$ so that $B(y,\alpha r) \subset B(x,r)$, but $B(y,\alpha r) \cap A = \varnothing$. There are many different notions of porosity, and what we are using here is the strongest one. Sometimes the condition is required to hold only  for all $r$ smaller than some $r_0$ and $x\in A$, but this would only affect the constant $\alpha$, and since we do not care too much about the values of our constants, we stick with the simpler definition. Further, it is convenient for us to work with the following dyadic version: we say that a set $A\subset\R$ is dyadic $k$-porous, or just $k$-porous for short, if for all $n$ and $J\in\DD_n$ there is $I\in\DD_{n+k}(J)$ with $I\cap A = \varnothing$. Dyadic porosity is easily seen to be equivalent with  Euclidean porosity, in the sense that for all $\alpha > 0$ there is $k=k(\alpha)$ so that any $\alpha$-porous set is dyadic $k$-porous  and conversely.

The definition of $\alpha$-porosity makes sense for discrete sets, but similarly as for uniform perfectness, it is useful only when $2^{-m} \ll \alpha$. This is not an issue, since we can always assume $m$ to be as large as we wish. Again, the point to keep in mind is that if $\spt\mu$ is $\alpha$-porous, then $\spt \mu^{(m)}$ is $\alpha/2$-porous for all $m$ large enough.

\begin{proposition}
 \label{prop:revese_discrete_improving_porous}
 For given $p,q>1$ and $\alpha,\sigma>0$, there is $\eps = \eps(p,q,\alpha,\sigma)>0$, so that the following holds for all large enough $m = m(p,q,\alpha,\sigma)$:

 Let $\mu$ be a $2^{-m}$-measure such that $\spt\mu$ is $\alpha$-porous. Then $$\| \mu \ast \nu \|_q < 2^{-\eps m} \| \mu \|_q$$ for all $2^{-m}$-measures $\nu$ with $\|\nu\|_p < 2^{-\sigma m/p'}$.
\end{proposition}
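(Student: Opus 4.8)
\subsection*{Proof proposal for Proposition \ref{prop:revese_discrete_improving_porous}}

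The plan is to follow the proof of Proposition \ref{prop:lq_norm_unif_perf} closely, with porosity playing the role of uniform perfectness. As there, I would first handle the case $m=D\ell$ and then reduce general $m$ to it by the same device: write $m=D\ell+j$, apply Lemma \ref{lem:discret_convolution} together with the comparison $\|\rho\|_q\le\|\rho^{(D\ell)}\|_q\le 2^{(D-1)/q'}\|\rho\|_q$ (and its analogue for $\|\nu^{(D\ell)}\|_p$, which for $\ell$ large remains $<2^{-(\sigma/2)D\ell/p'}$), and use that $\spt\mu^{(D\ell)}$ is $\alpha/2$-porous when $\spt\mu$ is $\alpha$-porous. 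So fix the dyadic porosity constant $k=k(\alpha)$, pick $\delta=\delta(p,q,\alpha,\sigma)>0$ small (chosen below) and a large integer $D_0=D_0(\alpha)$, and let $D\ge D_0$, $\eps>0$, $\ell_0$ be what Theorem \ref{thm:inverse} produces for $\delta$ and $D_0$; take $\ell\ge\ell_0$ and $m=D\ell$. Assuming for contradiction that $\|\mu\ast\nu\|_q\ge 2^{-\eps m}\|\mu\|_q$, apply Theorem \ref{thm:inverse} to get uniform $2^{-m}$-sets $A\subset\spt\mu$, $B\subset\spt\nu$ (translated so $k_A=k_B=0$), the branching sequence $R'$ of $A$, and the scale set $\SSS\subset[\ell]$.

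Two facts then collide. First, the hypothesis $\|\nu\|_p<2^{-\sigma m/p'}$ forces $\SSS$ to be large. Indeed $\log\|\nu\|_q^{-q'}=\frac{-\log\|\nu\|_q^q}{q-1}\ge\sigma'' m$, where $\sigma''=\sigma\min\{1,q'/p'\}>0$: if $q\le p$ this uses that $q\mapsto\frac{-\log\|\nu\|_q^q}{q-1}$ is non-increasing (monotonicity of R\'enyi entropies) and that its value at $q=p$ exceeds $\sigma m$; if $q\ge p$ it follows from $\|\nu\|_q\le\|\nu\|_p$. By part \ref{inverse6} of Theorem \ref{thm:inverse} this gives $D|\SSS|\ge(\sigma''-\delta)m$, so $|\SSS|\ge(\sigma''-\delta)\ell>0$ provided $\delta<\sigma''$. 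Second, for each $s\in\SSS$, part \ref{inverse5} gives $R'_s\ge 2^{(1-\delta)D}$, whereas $A\subset\spt\mu$ is $\alpha$-porous and therefore dyadic $k$-porous; iterating the porosity bound $N_{n+k}(A\cap J)\le 2^k-1$ from level $sD$ down to level $(s+1)D$ yields
\[
R'_s\le (2^k-1)^{\lfloor D/k\rfloor}\,2^{k}\le 2^{\,D-Dc_k/k+k},\qquad c_k:=-\log(1-2^{-k})>0 .
\]
For $s\in\SSS$ this contradicts $R'_s\ge 2^{(1-\delta)D}$ as soon as $\delta<c_k/(2k)$ and $D>2k^2/c_k$; the first is ensured by the choice of $\delta$ and the second by taking $D_0=D_0(\alpha)$ large (Theorem \ref{thm:inverse} allows $D\ge D_0$ for any prescribed $D_0$). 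Thus it suffices to set $\delta:=\tfrac12\min\{\sigma'',\,c_k/k\}$ and choose $D_0$ accordingly, both depending only on $p,q,\alpha,\sigma$; a final shrinking of $\eps$ (as in Proposition \ref{prop:lq_norm_unif_perf}) absorbs the loss in the reduction to general $m$.

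I do not expect a genuine obstacle: structurally this is a direct adaptation of Proposition \ref{prop:lq_norm_unif_perf}, with the combinatorial bound on $R'_s$ coming from porosity replacing the measure estimate \eqref{eq:measure_cut}, and the set $\SSS$ of scales of near-full branching replacing the set $L$ of scales of no branching. The only mildly delicate point is that $p$ and $q$ are a priori unrelated, so one must first convert the $L^p$ hypothesis on $\nu$ into the lower bound on $\log\|\nu\|_q^{-q'}$ needed by Theorem \ref{thm:inverse}; this is exactly the R\'enyi-monotonicity / interpolation observation above and costs nothing. Beyond that, the work is purely in keeping track of constants so that $\delta$, $D_0$, and hence $\eps$ and the threshold for $m$, depend only on $p,q,\alpha,\sigma$.
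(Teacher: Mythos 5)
Your proof is correct, and it closes the argument through a slightly different door of the inverse theorem than the paper does. Both proofs share the same core: apply Theorem \ref{thm:inverse}, use dyadic $k$-porosity of $\spt\mu$ (hence of $A'$) to bound the branching numbers, $R'_s \le (2^k-1)^{\lfloor D/k\rfloor}2^k$, and observe that after choosing $\delta$ small and $D_0$ large this forces $R'_s < 2^{(1-\delta)D}$ for every $s$. From there the paper argues: by \ref{inverse5} this makes $\mathcal S=\varnothing$, so $B$ is a singleton, so \ref{B1} gives an atom of mass $\ge 2^{-\delta m}$, hence $\|\nu\|_p\ge 2^{-\delta m}$, contradicting the hypothesis $\|\nu\|_p<2^{-\sigma m/p'}$ once $\delta<\sigma/p'$. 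You instead translate the $L^p$ hypothesis into the lower bound $\log\|\nu\|_q^{-q'}\ge\sigma'' m$ with $\sigma''=\sigma\min\{1,q'/p'\}$ (R\'enyi monotonicity if $q\le p$, the elementary $\|\nu\|_q\le\|\nu\|_p$ if $q\ge p$), feed this into \ref{inverse6} to force $|\mathcal S|>0$ when $\delta<\sigma''$, and then the inequality $R'_s\ge 2^{(1-\delta)D}$ from \ref{inverse5} for any $s\in\mathcal S$ collides with the porosity bound. The two arguments are essentially contrapositives of the same dichotomy; yours avoids the $B$-side structure entirely (you never use \ref{B1} or that $B$ collapses to a point) at the modest cost of the $p$-to-$q$ conversion, and your quantitative $\delta$ threshold ($\delta<\sigma\min\{1,q'/p'\}$) differs slightly from the paper's $\delta<\sigma/p'$; both suffice. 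The reduction from general $m$ to $m=D\ell$, the translation to $k_A=k_B=0$, and the porosity-to-dyadic-porosity conversion are handled the same way in both. One small bookkeeping slip: iterating $(1-2^{-k})$ a floor $\lfloor D/k\rfloor\ge D/k-1$ times produces $2^{D - c_k D/k + c_k + k}$, i.e. an extra additive $c_k$ in the exponent compared with what you wrote, but this is absorbed harmlessly by the same choice of $D_0$.
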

\begin{proof}
As in the proof of Proposition \ref{prop:lq_norm_unif_perf}, we may restrict to the case $m=D\ell$, for some fixed $D=D(p,q,\alpha,\sigma)$.

 Let $k=k(\alpha)$ so that $\alpha$-porous sets are dyadic $k$-porous, and let $\eta = \log(2^k - 1)/\log 2^k$,  so that $(2^k - 1) = (2^k)^\eta$. Choose $\delta$ so that $ \sigma  / p' > \delta > 0$ and $1-\delta > \eta+\delta$, and then pick $D_0$ so that $k/D_0 < \delta $. Let $\eps>0$ and $D\ge D_0$ be given by Theorem \ref{thm:inverse} for $q,\delta$ and $D_0$.

 Assume on the contrary that $\| \mu \ast \nu \|_q \geq 2^{-\eps m} \| \mu \|_q$ and apply Theorem \ref{thm:inverse} to the measures $\mu$ and $\nu$ to obtain the uniform $2^{-m}$-sets $A\subset \spt\mu$ and $B\subset \spt\nu$.

 From the porosity of $\spt\mu$ we get an estimate for the branching numbers $R'_s$ of $A'$. Indeed, since for every $I\in\DD_s(A')$ there is $J\in\DD_{s+k}(I)$ such that $A'\cap J=\varnothing$, we get
 \[
 R'_s
 \leq (2^k - 1)^{\lfloor \frac{D}{k} \rfloor} 2^k
 \leq (2^{k\eta})^{\frac{D}{k}} 2^{ k}
 \leq 2^{(\eta+k/D)D }
 < 2^{(1-\delta)D}\ ,
 \]
 using the choice of $\delta, D_0$ in the last inequality. According to part \ref{inverse5} of the inverse theorem, we now have that $\SSS$ is actually the empty set, and thus $B$ is a singleton.

 By \ref{B1}, we have $\nu(B) \geq 2^{-\delta m}$, which now trivially gives $\|\nu\|_p \geq 2^{-\delta m}$. On the other hand, we assumed that $ \|\nu\|_p < 2^{-\sigma m /p'}$. Combining these estimates gives $\delta > \sigma /p'$, which contradicts the choice of $\delta$.
\end{proof}
\begin{remark}
 \label{rem:on_discrete_assumptions}
 In Proposition \ref{prop:lq_norm_unif_perf} the assumption $\|\mu\|_q > 2^{-m(1-\eta)/q'}$ is simply saying that the $L^q$ norm of $\mu$ is not too close to minimal to begin with. Similarly, in Proposition \ref{prop:revese_discrete_improving_porous}, $\|\nu\|_p < 2^{-\sigma m/p'}$ only says that $\nu$ does not have almost maximal $L^p$ norm.
\end{remark}

\begin{theorem}
 \label{thm:revese_improving_porous}
 For given $p,q>1$ and $\alpha,\sigma>0$, there exists $\eps = \eps(p,q,\alpha,\sigma)>0$ so that the following holds:

 Let $\spt\mu$ be $\alpha$-porous. Then $D(\mu \ast \nu,q) \geq  D(\mu,q)+\eps$ for any $\nu\in\PP(\R)$ with $D(\nu,p) > \sigma$.
\end{theorem}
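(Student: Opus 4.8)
The plan is to deduce Theorem \ref{thm:revese_improving_porous} from the discrete Proposition \ref{prop:revese_discrete_improving_porous}, in the same spirit in which Theorem \ref{thm:unif_perf_improving} followed from Proposition \ref{prop:lq_norm_unif_perf}; in fact the passage from the discrete to the continuous statement should be a little cleaner here, because the hypothesis $D(\nu,p)>\sigma$ is a statement about a $\liminf$ and therefore controls $\|\nu^{(m)}\|_p$ for \emph{all} large $m$, so no case splitting (and hence no minimum of two quantities) will be needed.

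First I would reduce to the situation where $\mu$ and $\nu$ are both supported on $[0,1)$: applying a common affine map to both measures changes neither the porosity constant of $\spt\mu$, nor $D(\nu,p)$, nor the difference $D(\mu\ast\nu,q)-D(\mu,q)$. Then, as recalled just before the statement, the porosity of $\spt\mu$ passes to the discretizations: $\spt\mu^{(m)}$ is $\alpha/2$-porous once $m$ is large. Next I would rephrase the dimension hypothesis on $\nu$: unwinding $D(\nu,p)=\liminf_{m}\frac{-\log\|\nu^{(m)}\|_p^p}{(p-1)m}$ and using $(p-1)/p=1/p'$, the inequality $D(\nu,p)>\sigma$ gives $\|\nu^{(m)}\|_p<2^{-\sigma m/p'}$ for all sufficiently large $m$.

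With these two observations in place I would apply Proposition \ref{prop:revese_discrete_improving_porous}, with $\alpha/2$ in place of $\alpha$, to the $2^{-m}$-measures $\mu^{(m)}$ and $\nu^{(m)}$: this produces $\eps_0=\eps_0(p,q,\alpha,\sigma)>0$ such that $\|\mu^{(m)}\ast\nu^{(m)}\|_q<2^{-\eps_0 m}\|\mu^{(m)}\|_q$ for every large $m$. Combining this with Lemma \ref{lem:discret_convolution} yields $\|(\mu\ast\nu)^{(m)}\|_q^q\le C_q\,2^{-\eps_0 q m}\,\|\mu^{(m)}\|_q^q$, hence
\[
\frac{-\log\|(\mu\ast\nu)^{(m)}\|_q^q}{(q-1)m}\ \ge\ \frac{-\log C_q}{(q-1)m}+\frac{\eps_0 q}{q-1}+\frac{-\log\|\mu^{(m)}\|_q^q}{(q-1)m}.
\]
Taking $\liminf$ as $m\to\infty$ (the first term vanishes, and the last has $\liminf$ equal to $D(\mu,q)$) gives $D(\mu\ast\nu,q)\ge D(\mu,q)+\eps_0 q/(q-1)$, so the theorem holds with $\eps=\eps_0 q/(q-1)$.

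I do not expect any serious obstacle: all the substantive work sits in Theorem \ref{thm:inverse} and in Proposition \ref{prop:revese_discrete_improving_porous}. The only mild care needed is the bookkeeping of constants (the porosity loss $\alpha\mapsto\alpha/2$ in passing to $\mu^{(m)}$, and the factor $q/(q-1)$ relating $\eps_0$ to $\eps$), together with the observation that, in contrast with the proof of Theorem \ref{thm:unif_perf_improving}, the needed estimate on $\nu^{(m)}$ is here available at \emph{every} large scale, which is exactly what makes the $\liminf$ step immediate.
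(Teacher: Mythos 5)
Your argument is correct and follows essentially the same route as the paper's proof, which also reduces to $[0,1)$, notes that $\spt\mu^{(m)}$ is $\alpha/2$-porous and $\|\nu^{(m)}\|_p^p < 2^{-(p-1)\sigma m}$ for all large $m$, invokes Proposition \ref{prop:revese_discrete_improving_porous}, and then concludes as in the proof of Theorem \ref{thm:unif_perf_improving}, pointing out that no case-splitting is needed since the discrete estimate holds at every large scale. Your write-up simply makes explicit the bookkeeping that the paper leaves to the reader.
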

\begin{proof}
As in the proof of Theorem \ref{thm:unif_perf_improving}, we may assume that $\mu$ and $\nu$ are supported on $[0,1)$. We have that $\mu^{(m)}$ is $\alpha/2$-porous for all $m$ large enough, and $D(\nu,p) > \sigma$ implies that $\| \nu^{(m)} \|_p^p < 2^{-(p-1) \sigma m}$ for all $m$ large enough. Thus the assumptions of Proposition \ref{prop:revese_discrete_improving_porous} are satisfied for all large $m$. The rest of the proof goes like the proof of Theorem \ref{thm:unif_perf_improving} except that here there is no need to deal with different cases since the same estimate holds for all $m$.
\end{proof}

\subsection{Repeated convolutions}

Next, we derive a corollary for repeated convolutions. First, we state a useful inequality; it follows from Young's convolution inequality, and a detailed argument can be found in \cite[Lemma 5.2]{MosqueraShmerkin2018}.
\begin{lemma} \label{lem:convolution-young}
For any $\mu,\nu\in \PP(\R)$ it holds that
\[
D(\mu \ast\nu,\infty) \geq (D(\mu,2) + D(\nu,2)) / 2
\]
\end{lemma}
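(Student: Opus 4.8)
The plan is to bound the $L^\infty$ dimension of $\mu * \nu$ from below using the $L^2$ dimensions of the factors, and the natural tool is Young's convolution inequality in the form $\|f * g\|_\infty \le \|f\|_2 \|g\|_2$. First I would pass to the discretized setting: for each scale $m$, consider the densities $\mu_m = 2^m \sum_{Q\in\DD_m}\mu(Q)\mathbf{1}_Q$ and $\nu_m$ defined analogously (as in \eqref{eq:Lq-dim-from-Lq-norm}), and observe that $\|\mu_m\|_2^2 = 2^m \sum_Q \mu(Q)^2 = 2^m\|\mu^{(m)}\|_2^2$, so that $\limsup_m \tfrac{1}{m}\log\|\mu_m\|_2^2 = 1 - D(\mu,2)$, and similarly for $\nu$. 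The point is then that $\mu_m * \nu_m$ is comparable (up to a constant depending only on the bump, or directly via Lemma \ref{lem:discret_convolution} after suitable normalization) to a scale-$2^{-m}$ density approximation of $\mu*\nu$, and Young gives $\|\mu_m * \nu_m\|_\infty \le \|\mu_m\|_2\|\nu_m\|_2$.

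Next I would translate this $L^\infty$ bound on the density into a Frostman bound on $\mu*\nu$. If $g$ is a density at scale $2^{-m}$ for a measure $\rho$ with $\|g\|_\infty \le M$, then for any interval $I$ of length $2^{-m}$ we have $\rho(I) \lesssim M \cdot 2^{-m}$; more generally, for an interval of length $r \ge 2^{-m}$ one gets $\rho(B(x,r)) \lesssim M r$ by summing over the $O(2^m r)$ dyadic subintervals. Applying this with $M = \|\mu_m\|_2 \|\nu_m\|_2 \le 2^{(1 - D(\mu,2) + o(1))m/2} \cdot 2^{(1 - D(\nu,2) + o(1))m/2} = 2^{(1 - \frac{D(\mu,2)+D(\nu,2)}{2} + o(1))m}$, we obtain, for $r = 2^{-t}$ with $t \le m$,
\[
(\mu*\nu)(B(x, 2^{-t})) \lesssim 2^{(1 - \frac{D(\mu,2)+D(\nu,2)}{2} + o(1))m} \cdot 2^{-t}.
\]
Optimizing by taking $m$ comparable to $t$ (more precisely, choosing $m = \lceil t/\theta\rceil$ for $\theta$ slightly less than $1$, or simply letting $m$ be the largest scale for which the good $L^2$ estimate is available at level $t$) yields $(\mu*\nu)(B(x,2^{-t})) \lesssim_\epsilon 2^{-t(\frac{D(\mu,2)+D(\nu,2)}{2} - \epsilon)}$ for every $\epsilon > 0$ and all $t$ large. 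By definition of $D(\cdot,\infty)$ as the supremum of Frostman exponents, this gives $D(\mu*\nu,\infty) \ge \frac{D(\mu,2)+D(\nu,2)}{2} - \epsilon$ for every $\epsilon$, hence the claim.

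The main subtlety — and the step I would be most careful about — is the bookkeeping with the $\liminf$ defining $D(\mu,2)$ and $D(\nu,2)$: the good bound $\|\mu_m\|_2^2 \le 2^{(1-D(\mu,2)+\epsilon)m}$ holds only for $m$ large, and the corresponding bound for $\nu$ along possibly different large $m$, but since both are $\limsup$-type upper bounds (i.e. hold for \emph{all} large $m$, not just a subsequence), they are simultaneously available, so there is no subsequence mismatch. The remaining care is just in matching scales $t$ and $m$ so that the error $2^{-t}$ beats the growth $2^{\epsilon m}$, which is why any $m \asymp t$ with the implied constant slightly bigger than $1$ suffices. Since a detailed argument is given in \cite[Lemma 5.2]{MosqueraShmerkin2018}, I would keep this proof brief and refer there for the routine estimates.
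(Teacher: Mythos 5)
Your proposal is correct and follows the same route the paper has in mind: it simply invokes Young's convolution inequality in the form $\|f\ast g\|_\infty \le \|f\|_2\|g\|_2$ and refers to \cite[Lemma~5.2]{MosqueraShmerkin2018} for the detailed bookkeeping, which is precisely the structure you outline. The one place worth a brief comment is the passage from $\|\mu_m\ast\nu_m\|_\infty\le M$ to $(\mu\ast\nu)(B(x,r))\le C\,M\,r$ for $r\ge 2^{-m}$: strictly speaking $\mu_m\ast\nu_m$ is the density of $\mu^{(m)}\ast\nu^{(m)}$ mollified at scale $2^{-m}$ rather than of $\mu\ast\nu$ itself, but the two measures differ only by a shift of order $2^{-m}$, so the Frostman estimate transfers with a harmless absolute constant (and then the simplest matching of scales, $m=t$, already gives the claim), exactly as you indicate by deferring the routine estimates.
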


\begin{corollary}
 \label{cor:unif_perf_convolution_to_one}
 Let $\{\nu_i\}_{i\in\N} \subset\PP(\R)$ be a family of measures so that each $\nu_i$ is $(N_i,\gamma_i)$-uniformly perfect. Then, for all $q>1$, the sequence $\left( D(\ast_{i=1}^n \nu_i , q)\right)_n$ is strictly increasing, until the possible point where it reaches $1$.

 Moreover, if $N_i\equiv N$ and $\gamma_i \equiv \gamma$, then the sequence $D( \ast_{i=1}^n \nu_i , \infty)$ converges to $1$ as $n\to \infty$ at a rate that depends only on $N$ and $\gamma$  (and hence so does $D( \ast_{i=1}^n \nu_i ,q)$ for all finite $q$).
\end{corollary}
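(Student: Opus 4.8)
The plan is to apply Theorem \ref{thm:unif_perf_improving} inductively, but with some care about how the parameters track. For the first claim, fix $q>1$ and $n\ge 1$, and write $\mu = \ast_{i=1}^n \nu_i$ and $\mu' = \ast_{i=1}^{n+1}\nu_i = \mu \ast \nu_{n+1}$. If $D(\mu,q) = 1$ there is nothing to prove (and by \eqref{eq:Lq_improving} the sequence stays at $1$ thereafter), so assume $D(\mu,q) = 1 - \eta$ for some $\eta > 0$. Since $\nu_{n+1}$ is $(N_{n+1},\gamma_{n+1})$-uniformly perfect, Theorem \ref{thm:unif_perf_improving} applied with $\mu$ in the role of $\mu$, $\nu_{n+1}$ in the role of $\nu$, and this $\eta$, yields $\eps = \eps(q,N_{n+1},\gamma_{n+1},\eta) > 0$ with $D(\mu',q) > D(\mu,q) + \eps > D(\mu,q)$. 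This gives strict monotonicity until the value $1$ is reached; note that no uniformity in $n$ is claimed here, so the bare application of the theorem suffices.

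For the second (quantitative) claim, assume $N_i \equiv N$, $\gamma_i \equiv \gamma$. By Lemma \ref{lem:convolution-young}, $D(\ast_{i=1}^n\nu_i,\infty) \ge (D(\nu_1,2) + D(\ast_{i=2}^n \nu_i, 2))/2 \ge D(\ast_{i=2}^n\nu_i,2)/2$, so it suffices to show that $D(\ast_{i=1}^n \nu_i, 2) \to 1$ at a rate depending only on $N,\gamma$; and since $D(\cdot,q) \ge D(\cdot,2)$ for $q \le 2$ while for $q > 2$ we have $D(\cdot,q) \ge D(\cdot,\infty) \ge D(\ast_{i=2}^n \nu_i,2)/2$, control of the $q=2$ case controls all finite $q$ as well (at the cost of shifting the index by one and halving, which does not affect convergence to $1$). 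So the task reduces to the $q=2$ sequence $a_n := D(\ast_{i=1}^n \nu_i, 2)$. The key point is that when $a_n \le 1 - \eta$, Theorem \ref{thm:unif_perf_improving} with $q=2$ gives $a_{n+1} > a_n + \eps(2,N,\gamma,\eta)$, where crucially $\eps$ depends only on $N,\gamma,\eta$ and \emph{not} on $n$. Define $g(\eta) := \eps(2,N,\gamma,\eta) > 0$ for $\eta \in (0,1)$. Then whenever $a_n \le 1-\eta$ we have $a_{n+1} \ge a_n + g(\eta)$; since $(a_n)$ is nondecreasing and bounded by $1$, for any fixed $\eta_0 \in (0,1)$ the sequence can satisfy $a_n \le 1 - \eta_0$ for at most $1/g(\eta_0)$ consecutive values of $n$, so $a_n > 1 - \eta_0$ once $n > 1/g(\eta_0)$. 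As $\eta_0 > 0$ is arbitrary and the threshold $1/g(\eta_0)$ depends only on $N,\gamma,\eta_0$, this shows $a_n \to 1$ with a rate (i.e., a function $n \mapsto$ an explicit lower bound for $a_n$) depending only on $N$ and $\gamma$.

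The only mild subtlety — and the step I would be most careful about — is that one must not try to iterate a \emph{fixed} $\eps$: the increment $\eps(2,N,\gamma,\eta)$ may (and typically does) degenerate as $\eta \to 0$, i.e. as $a_n \to 1$, so one cannot conclude $a_n$ reaches $1$ in finitely many steps, only that it converges to $1$. The argument above handles this correctly by fixing a target $\eta_0$ first and using that below the threshold $1-\eta_0$ the increment is bounded below by the positive constant $g(\eta_0)$. Once this is phrased in terms of "for each $\eta_0$, $a_n > 1-\eta_0$ for all $n \ge 1 + \lceil 1/g(\eta_0)\rceil$", the rate claim and the uniformity in $n$ are both transparent, and the passage from $q=2$ to $q=\infty$ and then to general finite $q$ via Lemma \ref{lem:convolution-young} and monotonicity of $q \mapsto D(\cdot,q)$ is routine.
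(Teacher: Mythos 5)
Your argument for the first claim is correct and matches the paper, and your inductive bound on the $q=2$ sequence $a_n := D(\ast_{i=1}^n\nu_i,2)$ (fix $\eta_0$, observe the uniform increment $g(\eta_0)=\eps(2,N,\gamma,\eta_0)$ below the threshold $1-\eta_0$, and use monotonicity from \eqref{eq:Lq_improving}) is essentially the paper's. However, there is a genuine gap in the passage from $L^2$ to $L^\infty$. You apply Lemma \ref{lem:convolution-young} with the lopsided decomposition $\ast_{i=1}^n\nu_i = \nu_1 \ast \left(\ast_{i=2}^n\nu_i\right)$, obtaining
\[
D(\ast_{i=1}^n\nu_i,\infty)\ \ge\ \tfrac{1}{2}\bigl(D(\nu_1,2)+D(\ast_{i=2}^n\nu_i,2)\bigr)\ \ge\ \tfrac{1}{2}D(\ast_{i=2}^n\nu_i,2),
\]
and then claim this reduces the problem to showing $D(\ast\nu_i,2)\to 1$. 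But the right-hand side only tends to $1/2$ as $D(\ast_{i=2}^n\nu_i,2)\to 1$; dropping $D(\nu_1,2)$ costs you half the bound, so your reduction does \emph{not} give convergence of the $L^\infty$ dimensions to $1$ (your aside ``halving \ldots does not affect convergence to $1$'' is exactly where the error occurs).

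The fix, which is what the paper does, is to split symmetrically into two \emph{long} blocks: once $n=n(N,\gamma,\eta)$ is large enough that $D(\ast_{i=1}^n\nu_i,2)\ge 1-\eta$ and $D(\ast_{i=n+1}^{2n}\nu_i,2)\ge 1-\eta$ (the $q=2$ bound applies to any consecutive block of $n$ of the $\nu_i$, since the hypotheses $N_i\equiv N$, $\gamma_i\equiv\gamma$ are uniform), Lemma \ref{lem:convolution-young} applied to $\mu=\ast_{i=1}^n\nu_i$ and $\nu=\ast_{i=n+1}^{2n}\nu_i$ gives $D(\ast_{i=1}^{2n}\nu_i,\infty)\ge \tfrac{1}{2}\bigl((1-\eta)+(1-\eta)\bigr)=1-\eta$, and monotonicity in the number of factors then gives the same for all larger indices. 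With that replacement your argument becomes correct; the remaining passage to finite $q$ via the monotonicity of $q\mapsto D(\cdot,q)$ is fine as you wrote it.
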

\begin{proof}
For the first part, suppose $D(\ast_{i=1}^n \nu_i , q ) = 1-\eta$ with $\eta>0$. Let $\eps_{n+1}=\eps_{n+1}(q,N_{n+1},\gamma_{n+1},\eta)$ be the number given by Theorem \ref{thm:unif_perf_improving}. Then choosing $\mu = \ast_{i=1}^n \nu_i$ and $\nu = \nu_{n+1}$ we get $D(\ast_{i=1}^{n+1} \nu_i , q ) \geq  D(\ast_{i=1}^n \nu_i , q )+\eps_{n+1}$.

 If  $N_i\equiv N$ and $\gamma_i\equiv \gamma$, then the same argument shows that for all $\eta>0$, if $\eps=\eps(2,N,\gamma,\eta)$ is the number given by Theorem \ref{thm:unif_perf_improving} applied to $q=2, N,\gamma$ and $\eta$, then
\[
D(\ast_{i=1}^{n} \nu_i , 2) \ge \min(D(\nu_1,2)+(n-1)\eps,1-\eta) = 1-\eta,
\]
if $n=n(N,\gamma,\eta)=1+\lceil (1-\eta)/\eps\rceil$, and the same holds for $\ast_{i=n+1}^{2n} \nu_i$. Applying Lemma \ref{lem:convolution-young} to $\ast_{i=1}^{n} \nu_i$ and $\ast_{i=n+1}^{2n} \nu_i$ finishes the proof.
\end{proof}

\begin{remark}
 \label{rem:not_for_poro}
We cannot apply the same argument to a sequence of measures with porous support, even with positive $L^p$ dimension, since the sum of porous sets can fail to be porous, and so Theorem \ref{thm:revese_improving_porous} is not applicable.
\end{remark}

\section{Classes of uniformly perfect measures}

\label{sec:applications}

\subsection{Ahlfors regular and doubling measures}

In this section we show that some important classes of measures are uniformly perfect. Recall that a set is called  Ahlfors $\alpha$-regular if it is a support of an Ahlfors $\alpha$-regular measure. Ahlfors regular measures are uniformly perfect, and if the exponent is $<1$ then their supports are always porous:
\begin{lemma}
\label{lem:Ahlfors_is_unif_perf}
 If $\nu\in\PP(\R)$ is Ahlfors $\alpha$-regular with constant $C$, then $\nu$ is $( 2(2^\gamma C^2)^{\frac{1}{\alpha}} +1, \gamma )$-uniformly perfect for any $0<\gamma\leq 1$.
\end{lemma}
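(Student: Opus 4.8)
The plan is to verify the reverse-doubling inequality \eqref{eq:unif_perf} directly, by sandwiching $\nu(B(x,r))$ and $\nu(B(x,Nr))$ between the two sides of the Ahlfors bound \eqref{eq:ahlfors}. Write $t := (2^\gamma C^2)^{1/\alpha}$, so that $t \ge 1$ (because $C \ge 1$ and $\gamma > 0$) and the claimed constant is exactly $N = 2t + 1$. We may assume $\diam(\spt\nu) \le 1$, which is consistent with the normalization implicit in \eqref{eq:ahlfors} and in any case may be arranged by rescaling. Now fix $x \in \R$ and $r > 0$ with $\spt\nu \not\subset B(x, Nr)$. If $\nu(B(x,r)) = 0$ the inequality is trivial, so we assume $\nu(B(x,r)) > 0$ and pick $y \in \spt\nu \cap B(x,r)$, together with a point $z \in \spt\nu$ with $|z - x| \ge Nr$.

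The next step is to extract the admissible range of scales. Since $|y - x| < r$ and $|z - x| \ge Nr$, one has $\diam(\spt\nu) \ge |z - y| > Nr - r = 2tr$, hence $2tr < 1$ and, as $t \ge 1$, also $2r < 1$; this is precisely what makes Ahlfors regularity applicable at the radii $2r$ and $2tr$. Then two elementary inclusions close the argument. First, $|x-y|<r$ gives $B(x,r) \subset B(y, 2r)$, and applying \eqref{eq:ahlfors} at the point $y\in\spt\nu$ yields
\[
\nu(B(x,r)) \;\le\; \nu(B(y, 2r)) \;\le\; C (2r)^\alpha .
\]
Second, $|y-x|<r$ together with $2tr + r = Nr$ gives $B(y, 2tr) \subset B(x, Nr)$, and again using \eqref{eq:ahlfors} at $y$,
\[
\nu(B(x, Nr)) \;\ge\; \nu(B(y, 2tr)) \;\ge\; C^{-1} (2tr)^\alpha .
\]
Dividing, $\nu(B(x,Nr)) / \nu(B(x,r)) \ge (2t)^\alpha / (2^\alpha C^2) = t^\alpha / C^2 = 2^\gamma$, which is \eqref{eq:unif_perf}. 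Since $\gamma\in(0,1]$ was arbitrary (with $N$ allowed to depend on it), this proves the stated claim.

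As for difficulties, there is essentially no deep obstacle: the proof is a two-sided comparison with the Ahlfors inequality. The only points needing a little care are, first, ensuring the radii $2r$ and $2tr$ remain below $1$ so that \eqref{eq:ahlfors} may be invoked — this is exactly where the hypothesis $\spt\nu\not\subset B(x,Nr)$ (combined with $\diam\spt\nu\le 1$) enters — and, second, the bookkeeping of constants so that the factor $(2t)^\alpha/(2^\alpha C^2)$ collapses to precisely $2^\gamma$, which is what forces the choice $t=(2^\gamma C^2)^{1/\alpha}$ and hence the value $N=2(2^\gamma C^2)^{1/\alpha}+1$ in the statement.
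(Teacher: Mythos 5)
Your proof is correct and follows essentially the same route as the paper: both rely on the two-sided Ahlfors comparison $C^{-1}\rho^\alpha \le \nu(B(y,\rho)) \le C\rho^\alpha$ at a support point $y$ to obtain the reverse-doubling ratio $2^\gamma$ from the choice $t^\alpha = 2^\gamma C^2$. The paper first establishes this at points of the support with constant $N=(2^\gamma C^2)^{1/\alpha}$ and then invokes a general reduction (stated just before the proof) to pass from support points to arbitrary $x$ at the cost of replacing $N$ by $2N+1$; you simply unroll that reduction inline, and you are slightly more careful in checking that the radii stay within the range where \eqref{eq:ahlfors} applies.
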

\begin{proof}
 Let $N^\alpha = 2^\gamma C^2$, and fix $x\in\spt\nu$ and $r>0$ so that $\spt\nu\not\subset B(x, N r)$. By the Ahlfors regularity, we have
 \begin{align*}
  2^{^\gamma}\nu(B(x,r))
  &\leq
  2^{\gamma} C r^{\alpha}
  \leq
  C^{-1} N^\alpha r^{\alpha}
  \leq
  \nu(B(x, N r))
 \end{align*}
 To finish the proof we recall that if a measure is $(N,\gamma)$-uniformly perfect on points of its support, then it is $(2N+1,\gamma$)-uniformly perfect.
\end{proof}

\begin{lemma}
 \label{lem:Ahlfors_is_porous}
 If $\nu\in\PP(\R)$ is Ahlfors $\alpha$-regular, with constant $C\geq 1$ and $\alpha \in (0,1)$, then $\spt\nu$ is dyadic $k$-porous for  $k=\lceil (3 + 2\log C) (1-\alpha)^{-1} \rceil$.
\end{lemma}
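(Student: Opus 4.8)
The plan is to show that in any dyadic interval $J$, the measure $\nu$ cannot be spread across all $2^k$ subintervals of $\DD_{n+k}(J)$, by comparing the lower Ahlfors bound that each such subinterval would contribute against the upper Ahlfors bound for $J$ itself. Concretely, fix $n$ and $J\in\DD_n$; if $J\cap\spt\nu=\varnothing$ there is nothing to prove, so pick $x\in J\cap\spt\nu$. Suppose for contradiction that every $I\in\DD_{n+k}(J)$ meets $\spt\nu$. For each such $I$ choose $x_I\in I\cap\spt\nu$; since $I\subset J$ and $J\subset B(x_I, 2^{-n})$, while $B(x_I, 2^{-n-k})\subset 3J$ say, Ahlfors regularity gives $\nu(3J)\ge \sum_{I} \nu(B(x_I,2^{-n-k})) \ge 2^k\, C^{-1} 2^{-(n+k)\alpha}$. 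On the other hand $\nu(3J)\le \nu(B(x,2^{-n+1}))\le C\, 2^{(-n+1)\alpha}$. (One should of course be a little careful with the constant absorbed by enlarging $J$; enlarging it to, say, $3J$ or $B(x_I,2^{-n})$ and tracking the resulting factor $2^\alpha\le 2$ is harmless.)

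Combining the two estimates yields
\[
2^k C^{-1} 2^{-(n+k)\alpha} \le C\, 2^{(-n+1)\alpha},
\]
which after cancelling $2^{-n\alpha}$ and rearranging reads $2^{k(1-\alpha)} \le C^2\, 2^{\alpha} \le 2 C^2$, i.e. $k(1-\alpha) \le 1 + 2\log C$. Thus the assumption that all $2^k$ children are hit forces $k \le (1+2\log C)/(1-\alpha)$. Therefore, as soon as $k$ is a fixed integer with $k > (1+2\log C)/(1-\alpha)$ — and $k = \lceil (3 + 2\log C)(1-\alpha)^{-1}\rceil$ certainly satisfies this, with room to spare for the constant-chasing above — some $I\in\DD_{n+k}(J)$ is disjoint from $\spt\nu$, which is exactly dyadic $k$-porosity.

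The only mildly delicate point is bookkeeping the constants introduced when one replaces the interval $J$ by a slightly larger ball containing both $J$ and all the little balls $B(x_I, 2^{-n-k})$ centred at points of $\spt\nu$; this is where the difference between the clean bound $(1+2\log C)/(1-\alpha)$ and the stated $(3+2\log C)/(1-\alpha)$ comes from. I expect no real obstacle here: any reasonable choice of enlargement multiplies the right-hand side by a bounded power of $2$, absorbed into the additive constant in the exponent, and since the final $k$ only needs to be an integer strictly exceeding the threshold, a generous choice like $3+2\log C$ in the numerator leaves ample slack. The argument uses only the two-sided bound \eqref{eq:ahlfors} and holds uniformly over $n$ and $J$, as required by the dyadic porosity definition.
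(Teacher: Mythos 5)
Your approach is essentially the paper's: compare the upper Ahlfors bound for $\nu(J)$ against the summed lower Ahlfors bounds coming from children of $J$ that meet $\spt\nu$, and conclude that not all $2^k$ children can meet $\spt\nu$. However, there is one step that fails as written, and it is not the one you flag. The inequality $\nu(3J)\ge\sum_I \nu(B(x_I,2^{-n-k}))$ requires the balls $B(x_I,2^{-n-k})$ to be pairwise disjoint, and they need not be: each $x_I$ may sit near the boundary of its interval $I$, so balls centred in adjacent or even next-to-adjacent level-$(n+k)$ intervals can overlap. The caveat you do raise concerns only the enlargement from $J$ to $3J$, which is a separate, easier matter.

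The paper sidesteps this by not summing over all occupied children: it selects a $2\cdot 2^{-(n+k)}$-separated subcollection $\{y_j\}_{j=1}^{N/4}\subset\spt\nu\cap J$ kept at distance $>2^{-(n+k)}$ from $\partial J$, so that the balls $B(y_j,2^{-(n+k)})$ are pairwise disjoint and contained in $J$. Summing over these gives directly $N\le 4C^2 2^{\alpha k}$ for the number $N$ of occupied children, and the chosen $k$ makes $4C^2 2^{\alpha k}<2^k$ because $k(1-\alpha)\ge 3+2\log C>2+2\log C$. Your version can be repaired in the same spirit (e.g.\ keep only every third child, or invoke bounded overlap, costing an absolute constant), and the loss still fits inside the slack you built into the numerator $3+2\log C$; but the disjointification needs to be made explicit rather than absorbed into vague constant-chasing, since it is not a consequence of enlarging $J$.
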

\begin{proof}
Let $J\in\DD_{n}(\spt\nu)$ and fix any $x\in\spt\nu \cap J$. Then by definition we have
\begin{equation}
 \label{eq:n_Ahlfors_upper}
 \nu(J) \leq \nu(B(x,2^{-n})) \leq C 2^{-\alpha n}.
\end{equation}
Consider $N:=N_{n+k}(\spt\nu\cap J)$. We can assume that $N \geq 4$. It is now possible to choose a $2\cdot 2^{-(n+k)}$ separated collection $\{y_j\}_{j=1}^{N/4} \subset\spt\nu\cap J$ which is at distance $> 2^{-(n+k)}$ to the boundary of $J$. Thus
\begin{equation}
 \label{eq:branching_upper}
  C 2^{-\alpha n} \geq \nu(J) \geq \sum_{j=1}^{N/4} \nu( B(y_j,2^{-(n+k)}) ) \geq \frac{N}{4} C^{-1} 2^{-\alpha(n+k)},
\end{equation}
giving $N \leq 4 C^2 2^{\alpha k}$. By the choice of $k$, we have that $4 C^2 2^{\alpha k} < 2^k$, and hence $\spt\nu$ is $k$-porous.
\end{proof}

\begin{corollary}
\label{cor:lq_dim_ahlfors}
Given $q>1$, $\eta>0$, $\alpha\in(0,1)$ and $C\geq1$ there is $\e>0$ such that the following holds: if $\mu\in\PP(\R)$ is Ahlfors $\alpha$-regular with constant $C$ and if $\eta<D(\nu,q)<1-\eta$, then
\[
D(\mu\ast\nu,q) > \max(D(\mu,q),D(\nu,q)) + \eps.
\]
\end{corollary}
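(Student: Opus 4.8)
The plan is to combine the two improvement theorems already established, applied in the two possible orders of convolution. First I would recall that by Lemma \ref{lem:Ahlfors_is_unif_perf}, an Ahlfors $\alpha$-regular measure $\mu$ with constant $C$ is $(N,\gamma)$-uniformly perfect for parameters $N,\gamma$ depending only on $\alpha$ and $C$ (say with $\gamma=1$), and by Lemma \ref{lem:Ahlfors_is_porous}, since $\alpha<1$, the support $\spt\mu$ is $\alpha_0$-porous (equivalently dyadic $k$-porous) for $\alpha_0=\alpha_0(\alpha,C)>0$. So $\mu$ simultaneously satisfies the hypotheses required of the "first" factor in Theorem \ref{thm:revese_improving_porous} and of the "second" factor in Theorem \ref{thm:unif_perf_improving}.

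Next, I would split into the two inequalities to be proved. For the bound $D(\mu\ast\nu,q)>D(\nu,q)+\eps_1$: we are given $D(\nu,q)<1-\eta$, so Theorem \ref{thm:unif_perf_improving}, applied with the roles "$\mu$"$\,\leftrightarrow\nu$ and "$\nu$"$\,\leftrightarrow\mu$ (the latter being $(N,\gamma)$-uniformly perfect) and with the given $\eta$, produces $\eps_1=\eps_1(q,N,\gamma,\eta)=\eps_1(q,\alpha,C,\eta)>0$ with $D(\nu\ast\mu,q)\ge D(\nu,q)+\eps_1$; and $\nu\ast\mu=\mu\ast\nu$. For the bound $D(\mu\ast\nu,q)>D(\mu,q)+\eps_2$: we are given $D(\nu,q)>\eta$, and since $q\mapsto D(\nu,q)$ is non-increasing we also have $D(\nu,p)\ge D(\nu,q)>\eta$ for any fixed $p\le q$ — in particular we may simply take $p=q$ and $\sigma=\eta$. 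Then Theorem \ref{thm:revese_improving_porous}, applied with $\mu$ (whose support is porous) in the role of "$\mu$" and $\nu$ in the role of "$\nu$", with $\sigma=\eta$, yields $\eps_2=\eps_2(q,q,\alpha_0,\eta)=\eps_2(q,\alpha,C,\eta)>0$ with $D(\mu\ast\nu,q)\ge D(\mu,q)+\eps_2$.

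Finally I would set $\eps=\tfrac12\min\{\eps_1,\eps_2\}>0$; then both $D(\mu\ast\nu,q)>D(\mu,q)+\eps$ and $D(\mu\ast\nu,q)>D(\nu,q)+\eps$ hold, which is exactly the assertion $D(\mu\ast\nu,q)>\max\{D(\mu,q),D(\nu,q)\}+\eps$, with $\eps$ depending only on $q,\eta,\alpha,C$ as required. There is essentially no obstacle here: the only points needing a word of care are checking that the quantifiers in the two cited theorems allow $\nu$ to be an arbitrary measure in $\PP(\R)$ (they do), that the dependence of the uniform-perfectness and porosity constants on $(\alpha,C)$ funnels correctly into the dependence of $\eps$, and the trivial use of monotonicity of $D(\nu,\cdot)$ to discharge the hypothesis "$D(\nu,p)>\sigma$" — none of which is more than bookkeeping.
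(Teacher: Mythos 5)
Your proof is correct and is precisely the argument the paper intends (the corollary is stated without proof, immediately after Lemmas \ref{lem:Ahlfors_is_unif_perf} and \ref{lem:Ahlfors_is_porous}, as a direct consequence of Theorems \ref{thm:unif_perf_improving} and \ref{thm:revese_improving_porous}). The two lemmas convert Ahlfors regularity into uniform perfectness and porosity with constants depending only on $(\alpha,C)$, each theorem then supplies one of the two required inequalities, and taking half the minimum of the two $\eps$'s gives the conclusion.
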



The following example demonstrates how the case $q=\infty$ is different from the case of finite $q$.
\begin{example}
 \label{ex:the_third_time}
 Let $\mu\in\PP(\R)$ be symmetric around the origin and Ahlfors $\alpha$-regular, with constant $C$ (for example, one can take Hausdorff measure on a central self-similar Cantor set). By the Frostman exponent interpretation, it is clear that $D(\mu,\infty)=\alpha$. For $r > 0$ small, choose a maximal $r$-separated set $\{y_i\}_i\subset\spt\mu$. It is easy to see that it has cardinality at least $ C^{-1} r^{-\alpha}$. By the symmetry assumption, also $-y_i\in\spt\mu$ for all $i$. Thus
 \[
  (\mu\ast\mu)(B(0,r))
  \geq
  C^{-1} r^{-\alpha} ( C^{-1} (r/2)^{\alpha} )^2
  =
  C^{-3} 2^{-2\alpha} r^{\alpha},
 \]
 giving that any Frostman exponent of $\mu\ast\mu$ is at most $\alpha$. This means that $D(\mu\ast\mu,\infty) \leq \alpha$ and thus  $D(\mu*\mu,\infty)=D(\mu,\infty)$. This shows that neither Theorem \ref{thm:unif_perf_improving} nor \ref{thm:revese_improving_porous} can be extended to $q=\infty$.

 We note that, however, in the Ahlfors-regular (or just uniformly perfect) case one does have $D(\mu*\mu*\mu,\infty)>D(\mu,\infty)$. Indeed, by Theorem \ref{thm:unif_perf_improving} applied with $q=2$ and Lemma \ref{lem:convolution-young}, we get
 \begin{align*}
  D(\mu\ast\mu\ast\mu, \infty)
  &\geq
  ( D(\mu\ast\mu , 2) + D(\mu, 2) ) / 2 \\
  &\geq
  ( D(\mu, 2) + \eps + D(\mu, 2) ) / 2 \\
  &\geq
  D(\mu,\infty) + \eps/2.
 \end{align*}

\end{example}

 Recall that a measure $\mu$ is said to be doubling with constant $C$, if for all $x\in\spt\mu$ and $r>0$ it holds that $C \mu(B(x,r)) \geq \mu(B(x,2r))$. We emphasize that we only require the doubling condition for points in the support of the measure. Ahlfors regular measures are doubling, but the converse does not need to hold. We have the following lemma.

\begin{lemma}
\label{lem:doubling_to_unif_perf}
 If $\nu\in\PP(\R)$ has uniformly perfect support and is doubling (on the support), then $\nu$ is uniformly perfect, quantitatively.
\end{lemma}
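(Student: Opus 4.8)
The plan is to unwind the definitions and show that the reverse-doubling inequality \eqref{eq:unif_perf} for $\nu$ follows by combining the uniform perfectness of $\spt\nu$ (a purely metric statement about the location of points of the support) with the doubling property (a statement about how the measure redistributes mass between comparable balls). Let $K>1$ be the uniform perfectness constant of $A:=\spt\nu$, and let $C\ge 1$ be the doubling constant of $\nu$ on $A$. I claim $\nu$ is $(N,\gamma)$-uniformly perfect for a suitable $N>1$ and $\gamma>0$ depending only on $K$ and $C$.

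First I would reduce to $x\in A$: as noted in \S\ref{sec:proofs}, it suffices to verify \eqref{eq:unif_perf} for $x\in\spt\nu$ at the cost of enlarging $N$ by a bounded factor. So fix $x\in A$ and $r>0$ with $A\not\subset B(x,Nr)$ (where $N$ will be chosen below, large in terms of $K$). Then certainly $A\not\subset B(x,Kr')$ for some scale $r'\in[r,Nr/K]$, and applying uniform perfectness of $A$ (iterating it if needed, or choosing $r'$ appropriately) produces a point $z\in A$ with $c_1 r\le |z-x|\le c_2 r$, where $c_1>1$ and $c_2$ depend only on $K$; by taking $N$ large enough compared to $c_2$ we guarantee $B(z, c_1 r / 2)\subset B(x,Nr)\setminus B(x,r)$ — more precisely, $B(z,\rho)\subset B(x,Nr)$ and $B(z,\rho)\cap B(x,r)=\varnothing$ for a radius $\rho$ comparable to $r$. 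Since $z\in A$, the ball $B(z,\rho)$ carries positive mass, and here is where doubling enters: starting from $B(z,\rho)$ and doubling $\lceil \log_2(c_2 r/\rho)+1\rceil$ times gives a ball $B(z, 2^j\rho)\supset B(x,r)$ with $\nu(B(x,r))\le \nu(B(z,2^j\rho))\le C^{j}\nu(B(z,\rho))$, so that $\nu(B(z,\rho))\ge C^{-j}\nu(B(x,r))$ with $j$ bounded in terms of $K$. Combining,
\[
\nu(B(x,Nr)) \ge \nu(B(x,r)) + \nu(B(z,\rho)) \ge (1 + C^{-j})\,\nu(B(x,r)),
\]
and since $1+C^{-j}\ge 2^\gamma$ for $\gamma = \log(1+C^{-j}) > 0$ depending only on $K$ and $C$, this is exactly \eqref{eq:unif_perf}.

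The main obstacle, and the step requiring the most care, is the metric bookkeeping in the first paragraph: one must extract from the uniform perfectness of $A$ a point $z$ at a scale comparable to $r$ whose associated ball $B(z,\rho)$ is genuinely disjoint from $B(x,r)$ yet contained in $B(x,Nr)$, with $\rho$ bounded below by a fixed multiple of $r$ — this is where the constant $N$ gets pinned down, and one has to be slightly careful that a single application of the uniform perfectness condition (as opposed to an iteration) suffices, or else track the blow-up over a bounded number of iterations. Once that geometric input is in hand, the doubling estimate is a routine finite iteration, and assembling the pieces into the form \eqref{eq:unif_perf} with an explicit $\gamma$ is immediate. I would also recall explicitly the reduction "uniformly perfect on support points $\Rightarrow$ uniformly perfect" used at the end, exactly as in Lemma \ref{lem:Ahlfors_is_unif_perf}.
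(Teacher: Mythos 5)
Your proposal follows essentially the same route as the paper's proof: locate a point $z$ of the support at a scale comparable to but separated from $r$ via uniform perfectness of $\spt\nu$, use doubling a bounded number of times to show the ball around $z$ carries a definite fraction of $\nu(B(x,r))$, and then add the two disjoint contributions inside $B(x,Nr)$. The concern you flag about whether a single application of uniform perfectness suffices is indeed resolved cleanly: taking $N=2K+1$ and applying the uniform perfectness of $\spt\nu$ at scale $2r$ gives $y\in\spt\nu\cap\big(B(x,2Kr)\setminus B(x,2r)\big)$, and then $B(y,r)$ is automatically disjoint from $B(x,r)$ and contained in $B(x,Nr)$, so one can simply take $\rho=r$ and $j=\lceil\log(N+1)\rceil$; no iteration or blow-up tracking is needed.
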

\begin{proof}
Suppose $\spt\nu$ is $K$-uniformly perfect and let $N=2K+1$. Assume $\spt\nu \not\subset B(x,Nr)$ . By the uniform perfectness of $\spt\nu$, there exists $y\in \spt\nu \cap (B(x,2Kr)\setminus B(x,2r))$. In particular, $B(y,r)\subset B(x,Nr)$ and $B(y,r) \cap B(x,r) =\varnothing$. Let $M=\lceil \log (N+1)\rceil$. By the doubling condition,
\[
\nu(B(y,r)) \ge C^{-M} \nu(B(y,(N+1)r)) \ge C^{-M} \nu(B(x,r)).
\]
Hence $\nu(B(x,Nr)) \ge (1+C^{-M})\nu(B(x,r))$, yielding uniform perfectness.
\end{proof}

\begin{corollary}
\label{cor:lq_dim_doubling}
For given $q\in (1,\infty)$, $K,C\in\N_{\geq 1}$ and $\eta > 0$, there is $\eps = \eps(q,K,C,\eta)>0$, such that the following holds:

If $\nu\in\PP(\R)$ doubling, with constant $C$ and $\spt\mu$ is uniformly perfect with constant $K$, then $D(\mu\ast\nu,q) > D(\mu,q)+\eps$ for any measure $\mu$ with $D(\mu,q) < 1-\eta$.
\end{corollary}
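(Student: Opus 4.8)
The plan is to deduce Corollary \ref{cor:lq_dim_doubling} directly from Theorem \ref{thm:unif_perf_improving} by supplying the missing hypothesis: we need to know that $\nu$ itself (not merely its support) is uniformly perfect, with constants controlled by $q,K,C,\eta$. This is exactly the content of Lemma \ref{lem:doubling_to_unif_perf}, which says that a doubling measure (with constant $C$) whose support is uniformly perfect (with constant $K$) is $(N,\gamma)$-uniformly perfect, where $N$ and $\gamma$ depend only on $K$ and $C$ in a quantitative way: tracing the proof, $N = 2K+1$, $M = \lceil \log(N+1)\rceil$, and one gets the reverse doubling factor $1 + C^{-M}$, so $\gamma = \log(1 + C^{-M}) > 0$ works.

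Concretely, I would argue as follows. Let $\nu$ be doubling with constant $C$ and suppose $\spt\nu$ is uniformly perfect with constant $K$. Apply Lemma \ref{lem:doubling_to_unif_perf} to obtain $N = N(K,C)$ and $\gamma = \gamma(K,C) > 0$ such that $\nu$ is $(N,\gamma)$-uniformly perfect. Then apply Theorem \ref{thm:unif_perf_improving} with these $N$, $\gamma$, the given $q$, and the given $\eta$: this produces $\eps = \eps(q,N,\gamma,\eta) > 0$, and hence $\eps = \eps(q,K,C,\eta) > 0$, such that $D(\mu \ast \nu, q) > D(\mu,q) + \eps$ for every $\mu \in \PP(\R)$ with $D(\mu,q) \le 1-\eta$ — in particular for every $\mu$ with $D(\mu,q) < 1-\eta$, as claimed. (One should double-check the harmless asymmetry in the roles of $\mu$ and $\nu$: Theorem \ref{thm:unif_perf_improving} asks for the \emph{second} argument to be uniformly perfect, while the corollary puts the doubling hypothesis on $\nu$ and the smallness hypothesis on $D(\mu,q)$; since $\mu \ast \nu = \nu \ast \mu$ and $D$ is symmetric in that sense, there is no issue.)

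There is essentially no obstacle here — the corollary is a one-line consequence of the two results already in hand. The only point requiring a modicum of care is confirming that the passage through Lemma \ref{lem:doubling_to_unif_perf} is genuinely \emph{quantitative}, i.e. that $N$ and $\gamma$ are explicit functions of $K$ and $C$ alone and do not secretly depend on $\nu$; this is visible from the proof of that lemma, which only invokes the two structural constants. So the write-up is short:

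\begin{proof}
By Lemma \ref{lem:doubling_to_unif_perf}, since $\nu$ is doubling with constant $C$ and $\spt\nu$ is uniformly perfect with constant $K$, the measure $\nu$ is $(N,\gamma)$-uniformly perfect for some $N=N(K,C)>1$ and $\gamma=\gamma(K,C)>0$. Applying Theorem \ref{thm:unif_perf_improving} with this $N$ and $\gamma$, and with the given $q$ and $\eta$, yields $\eps=\eps(q,N,\gamma,\eta)>0$, and hence $\eps=\eps(q,K,C,\eta)>0$, such that $D(\mu\ast\nu,q)>D(\mu,q)+\eps$ for every $\mu\in\PP(\R)$ with $D(\mu,q)\le 1-\eta$. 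In particular this holds for every $\mu$ with $D(\mu,q)<1-\eta$.
\end{proof}
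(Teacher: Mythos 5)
Your proof is correct and is essentially the only route: compose Lemma \ref{lem:doubling_to_unif_perf} (which converts ``doubling with uniformly perfect support'' into ``$(N,\gamma)$-uniformly perfect'' with $N,\gamma$ depending quantitatively on $K,C$) with Theorem \ref{thm:unif_perf_improving}. You also correctly read the hypothesis as ``$\spt\nu$ uniformly perfect'' — the statement's ``$\spt\mu$'' is evidently a typo, since the uniform perfectness must go with the measure one convolves by, exactly as Lemma \ref{lem:doubling_to_unif_perf} requires.
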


\subsection{Moran constructions}

We obtain a general class of measures that are uniformly perfect via Moran constructions. This class turns out to include, for example, all non-trivial self-similar and self-conformal measures on the line.

Let $\Sigma=\{1,\ldots,\kappa\}$ be a finite alphabet. Denote the set of words of length $n$ by $\Gamma_n = \Sigma^n$ and the set of all finite words by $\Gamma_* = \bigcup_{n\in\N}\Gamma_n$, where we interpret $\Gamma_0=\{\varnothing\}$, with $\varnothing$ being empty word. Set $\Gamma = \Sigma^\N$, the set of right-infinite words. If $\ii=(i_1,i_2,\ldots)\in\Gamma$, then we denote $\iin{n}=(i_1,i_2,\ldots,i_n)$, $[\iin{n}]=\{(j_1,j_2,\ldots)\in\Gamma : j_m=i_m \text{ for all } m\leq n \}$, and $\iin{n}^{-} = (i_1,i_2,\ldots,i_{n-1})\in\Gamma_{n-1}$. We equip $\Sigma$ with the discrete topology, and $\Gamma$ with the induced product topology.

Assume that we have a collection $\{ E_\iii : \iii \in \Gamma_* \}$ of compact sets of positive diameter in $\R$. Such a collection is called a \emph{Moran construction}, if the following conditions are satisfied:
\begin{labeledlist}{M}
  \item $E_\iii \subset E_{\iii^-}$ for all $\iii \in \Gamma_* \setminus \{ \varnothing \}$, \label{M1}
  \item $\diam(E_{\iii|_n}) \to 0$ as $n \to \infty$ for all $\iii \in \Gamma$, \label{M2}
\end{labeledlist}
Given a Moran construction, we define the \emph{projection mapping} $\pi \colon \Gamma \to \R$ by
\begin{equation*}
  \{ \pi(\iii) \} = \bigcap_{n=1}^\infty E_{\iii|_n}
\end{equation*}
for all $\iii \in \Gamma$. The assumptions \ref{M1}, \ref{M2} guarantee that $\pi$ is a well-defined continuous mapping. The compact set $\pi(\Gamma)$ is called the \emph{limit set} of the Moran construction and throughout the paper, we shall denote it by $E$.

Suppose that we have absolute constants $0<p_*<p^*<1$ and that for each $\ii\in\Gamma_*$ we have numbers $p_\ii$ satisfying
\begin{equation} \label{eq:conditions-measure}
p_* \leq p_\ii \leq p^* \quad\text{and}\quad  \sum_{ \ii\in\Gamma_* \text{ and } \ii^-=\jj } p_\ii = 1
\end{equation}
for any $\jj\in\Gamma$. Let $\tilde\nu$ be the unique measure on $\Gamma$ satisfying $\tilde\nu[\ii] = \prod_{n=1}^{|\ii|} p_{\iin{n}}$. We denote the push forward measure $\pi\tilde\nu$ simply by $\nu$ and call it a Moran construction measure. If the factors $p_\ii$ only depend on the last coordinate of $\ii$, then $\tilde\nu$ is called a Bernoulli measure.

In addition to the conditions \ref{M1} and \ref{M2} we need to impose some other geometric conditions, to avoid degenerated situations. Thus we introduce the following conditions
\begin{labeledlist}{M}
\setcounter{enumi}{2}
  \item there exists $\beta \ge 1$ such that $\diam(E_{\iii\jjj}) \le \beta\diam(E_\iii)\diam(E_\jjj)$ for all $\iii\jjj\in \Gamma_*$, \label{M3}
  \item there exists $0<\lalpha<1$ such that $\diam(E_\iii) \ge \lalpha\diam(E_{\iii^-})$ for all $\iii \in \Gamma_* \setminus \{ \varnothing \}$, \label{M4}
  \item there exists $\roo>0$, so that for all $\ii\in\Gamma_*$ there exist $x,y\in E\cap E_\ii$ with $x \neq y$ and $d(x,y)\geq \roo \diam(E_\ii)$. \label{M5}
\end{labeledlist}
The only part that deals with the positions of the construction sets is \ref{M5}. This can be viewed as a very minimal separation condition. Essentially, this requires that each $E_\ii$ contains two pieces, say $E_{\ii j}$ and $E_{\ii k}$, of the construction that are really different, but they may still overlap a lot. If \ref{M5} is satisfied, then by setting $E'_\ii = E_\ii\cap E$ we get a Moran construction, with the same limit set, satisfying \ref{M3}-\ref{M5} with constants $\beta'=\beta \roo^{-2}$, $\lalpha'=\lalpha\roo$ and $\roo'=1$. With this at hand, from now on we implicitly assume that $E_\ii=E\cap E_\ii$, and so $\roo = 1$.


\begin{proposition}
 If $\{E_\ii:\ii\in\Gamma_*\}$ is a Moran construction satisfying \ref{M1}--\ref{M5} and $\nu\in\PP(\R)$ is an associated Moran construction measure, then $\nu$ is uniformly perfect.
\end{proposition}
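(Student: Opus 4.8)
The plan is to establish directly the reverse doubling inequality \eqref{eq:unif_perf}. By the observation made at the start of Section \ref{sec:proofs}, it suffices to produce $N>1$ and $\gamma\in(0,1]$, depending only on the data $\kappa,\beta,\lalpha,\roo,p_*,p^*$ of the construction, such that $\nu(B(x,Nr))\ge 2^{\gamma}\nu(B(x,r))$ whenever $x\in E=\spt\nu$, $r>0$ and $E\not\subset B(x,Nr)$. The strategy is to exhibit inside the annulus $B(x,Nr)\setminus B(x,r)$ a family of cylinder sets whose total $\nu$-mass is at least $p_0\,\nu(B(x,r))$ for some constant $p_0>0$; then $\nu(B(x,Nr))\ge(1+p_0)\nu(B(x,r))$ and one takes $\gamma=\log_2(1+p_0)$.

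Two preliminary reductions are used. First, passing to the construction whose alphabet is the set of words of a fixed length $M$ (its cylinders being the $E_\iii$ with $M\mid|\iii|$) changes neither $E$ nor $\nu$ and only affects the constants; choosing $M$ with $\sup_{\iii\in\Gamma}\diam(E_{\iii|_M})<1/\beta$, which is possible by \ref{M2} and Dini's theorem on the compact space $\Gamma$, we may assume $\beta\cdot\max_{\jjj\in\Sigma}\diam(E_{\jjj})<1$. By \ref{M3} this gives $\diam(E_{\iii\jjj})\le\theta\diam(E_\iii)$ for all $\iii\in\Gamma_*$, $\jjj\in\Sigma$, with $\theta<1$; together with \ref{M4} it follows that if $\cyl{q}$ is a descendant of $\cyl{m}$ with $\diam(E_{\cyl{q}})\ge c\,\diam(E_{\cyl{m}})$, then $|\cyl{q}|-|\cyl{m}|$ is bounded in terms of $c$ and the data, so $\tilde\nu[\cyl{q}]\ge p_*^{\,|\cyl{q}|-|\cyl{m}|}\tilde\nu[\cyl{m}]\ge c_1\tilde\nu[\cyl{m}]$ with $c_1>0$. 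Second, since after the normalization $E_\iii=E\cap E_\iii$ every point of $E$ lying in $E_{\cyl{m}}$ can (using that children cover parents) be reached along the tree from $\cyl{m}$, condition \ref{M5} supplies, for each $\cyl{m}$, two points of $\pi([\cyl{m}])$ at mutual distance $\ge\roo\diam(E_{\cyl{m}})$.

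Now fix $x\in E$ and $r>0$ with $E\not\subset B(x,Nr)$, where $N$ is chosen large in terms of $\lalpha$ and $\roo$. Since $E\subset\overline{B}(x,\diam E)$ we have $\diam E>Nr/4$, so $\mathcal{C}=\{\cyl{m}\in\Gamma_*:\diam(E_{\cyl{m}})\le Nr/4<\diam(E_{\cyl{m}^-})\}$ is an antichain, $\{[\cyl{m}]\}_{\cyl{m}\in\mathcal{C}}$ partitions $\Gamma$, and by \ref{M4} each $\cyl{m}\in\mathcal{C}$ has $\diam(E_{\cyl{m}})\in(\lalpha Nr/4,Nr/4]$. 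Put $\mathcal{C}_0=\{\cyl{m}\in\mathcal{C}:E_{\cyl{m}}\cap B(x,r)\ne\varnothing\}$. If $\pi(\jj)\in B(x,r)$ then the unique $\cyl{m}\in\mathcal{C}$ with $\jj\in[\cyl{m}]$ satisfies $\pi(\jj)\in E_{\cyl{m}}\cap B(x,r)$, so $\cyl{m}\in\mathcal{C}_0$; since $\nu=\pi\tilde\nu$ this yields $\nu(B(x,r))=\tilde\nu(\pi^{-1}B(x,r))\le\sum_{\cyl{m}\in\mathcal{C}_0}\tilde\nu[\cyl{m}]$. Fix $\cyl{m}\in\mathcal{C}_0$ and a point $c\in E_{\cyl{m}}\cap B(x,r)$. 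By the reformulation of \ref{M5}, $E_{\cyl{m}}$ contains $a\in\pi([\cyl{m}])$ with $d(c,a)\ge\tfrac12\roo\diam(E_{\cyl{m}})>\roo\lalpha Nr/8$, hence $d(x,a)>\roo\lalpha Nr/8-r>\roo\lalpha Nr/16$ for $N$ large. Writing $a=\pi(\cyl{a})$ with $\cyl{a}\in[\cyl{m}]$ and letting $\cyl{q}=\cyl{q}(\cyl{m})$ be the shortest initial segment of $\cyl{a}$ with $\diam(E_{\cyl{q}})\le\roo\lalpha Nr/32$, the cylinder $E_{\cyl{q}}$ is a proper descendant of $E_{\cyl{m}}$, has $\diam(E_{\cyl{q}})\in(\lalpha^2\roo Nr/32,\roo\lalpha Nr/32]$, and lies in $\overline{B}(a,\roo\lalpha Nr/32)$; consequently every $y\in E_{\cyl{q}}$ has $d(x,y)>\roo\lalpha Nr/16-\roo\lalpha Nr/32>r$ and $d(x,y)\le d(x,a)+\roo\lalpha Nr/32\le r+\diam(E_{\cyl{m}})+\roo\lalpha Nr/32<Nr$, so $E_{\cyl{q}}\subset B(x,Nr)\setminus B(x,r)$. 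Since $\diam(E_{\cyl{q}})\ge(\lalpha^2\roo/8)\diam(E_{\cyl{m}})$, the first reduction gives $\tilde\nu[\cyl{q}]\ge p_0\tilde\nu[\cyl{m}]$ with $p_0>0$ depending only on the data.

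To conclude, distinct elements of $\mathcal{C}$ are pairwise incomparable, hence so are the $\cyl{q}(\cyl{m})$, and the sets $[\cyl{q}(\cyl{m})]$, $\cyl{m}\in\mathcal{C}_0$, are pairwise disjoint in $\Gamma$ with $\pi([\cyl{q}(\cyl{m})])\subset E_{\cyl{q}(\cyl{m})}\subset B(x,Nr)\setminus B(x,r)$; therefore
\[
 \nu(B(x,Nr))-\nu(B(x,r))\ \ge\ \tilde\nu\Big(\bigcup_{\cyl{m}\in\mathcal{C}_0}[\cyl{q}(\cyl{m})]\Big)\ =\ \sum_{\cyl{m}\in\mathcal{C}_0}\tilde\nu[\cyl{q}(\cyl{m})]\ \ge\ p_0\sum_{\cyl{m}\in\mathcal{C}_0}\tilde\nu[\cyl{m}]\ \ge\ p_0\,\nu(B(x,r)),
\]
so $\nu(B(x,Nr))\ge(1+p_0)\nu(B(x,r))=2^{\gamma}\nu(B(x,r))$ with $\gamma=\log_2(1+p_0)\in(0,1]$, as required. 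The step I expect to be the main obstacle is the reformulation of \ref{M5} used in the second reduction: that the two far points it provides inside $E_{\cyl{m}}$ may be taken in $\pi([\cyl{m}])$, equivalently that $\diam\pi([\cyl{m}])$ is comparable to $\diam(E_{\cyl{m}})$. This is precisely where the normalization $E_\iii=E\cap E_\iii$ is essential, and it requires controlling overlaps between cylinders of the same generation; in the presence of heavy overlaps one must argue more carefully, attributing the mass of $B(x,r)$ to the cylinders that actually carry it rather than to the cover sets containing it. Verifying the uniform contraction $\theta<1$ (the passage to a high iterate) is the other ingredient that needs the short compactness argument indicated above.
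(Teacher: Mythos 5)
Your proof is correct modulo the point you yourself flag, and it is essentially the same argument as the paper's: cover $\pi^{-1}$ of a ball by the natural antichain of cylinders, use \ref{M3}--\ref{M5} for each one to locate a bounded-depth descendant cylinder that escapes the small ball while staying in the big one, and sum, using $p_\ii\geq p_*$ to bound the escaping descendant's mass from below. The only real difference is where you cut: you stop the antichain at diameter $\approx Nr$ and push a descendant cylinder into the annulus, whereas the paper stops at diameter comparable to the small interval $I$ and finds, at a fixed depth $M$, a child cylinder missing $I$, arriving at $\nu(I)\le(1-p_*^{M})\nu(12\lalpha^{-1}I)$; both reductions serve the same purpose of bounding the depth (and hence the mass ratio) via \ref{M3}--\ref{M4}. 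The subtlety you flag at the end --- that \ref{M5} supplies far points of $E\cap E_{\cyl{m}}$, whereas the construction of $\cyl{q}(\cyl{m})$ needs a far point of $\pi([\cyl{m}])$, which requires children to cover their (normalized) parent --- is genuine, but it is present just as silently in the paper's own proof: when the paper asserts that the endpoints of $E_\ii$ lie in some $E_{\ii\kk}$ with $\kk\in\Gamma_M$, it is making exactly this assumption. Conditions \ref{M1}--\ref{M5} do not by themselves exclude a point of $E$ sitting geometrically inside $E_\ii$ while all of its symbolic codes pass through a sibling of $\ii$, so this is an implicit extra hypothesis (automatic in all the applications, where children do cover parents). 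Flagging it, as you do, is a point in your favour rather than a gap relative to the paper.
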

\begin{proof}
By rescaling and translating, we may assume that $\{0,1\}\subset E\subset[0,1]$. Let $M$ be the smallest integer with $\diam E_\kk \leq \frac{\alpha}{4} \beta^{-1} $ for all $\kk\in\Gamma_M$, which exists by \ref{M2} and compactness. 

Let $I$ be an interval intersecting $E$, but with $E \not\subset 12\lalpha^{-1} I$. Consider the collection
\[
 \Phi(I) = \{ \ii\in\Gamma : E_{\ii}\cap I \neq\varnothing\text{ and } \diam E_{\ii} < 4 \lalpha^{-1} \diam I \leq \diam E_{\ii^-} \}
\]
Now $E_\ii\subset 3 \cdot 4 \lalpha^{-1}I$. Since the endpoints of $E_\ii$ are in the limit set, for each $\ii\in\Phi(I)$ there are $\kk_l,\kk_r\in\Gamma_M$ so that $E_{\ii\kk_l}$ contains the left endpoint of $E_\ii$ and $E_{\ii\kk_r}$ contains the right endpoint of $E_\ii$. By \ref{M3} and definition of $M$, $\diam E_{\ii\kk_l},\diam E_{\ii\kk_r} \leq \frac{\alpha}{4}\diam E_\ii\leq \diam I$, and by \ref{M4}, $\diam E_\ii \geq 4 \diam I$, leading to $\dist( E_{\ii\kk_l} , E_{\ii\kk_r} ) \geq 2\diam I$, so they both cannot intersect $I$. Choose one that does not intersect and label it $\ii'$. By definition of the measure $\tilde\nu$, we have $\nu([\ii']) \geq p_*^M \nu([\ii])$. Since
\[
 \pi^{-1}I \subset \bigcup_{\ii\in\Phi(I)} [\ii]\setminus [\ii'],
\]
we get that
\begin{align*}
 \nu(I)
 &=
 \tilde\nu( \pi^{-1}I )
 \leq
 \sum_{\ii\in\Phi(I)} \tilde\nu([\ii]\setminus [\ii']) \\
 &\leq
 \sum_{\ii\in\Phi(I)} (1-p_*^M)\tilde\nu[\ii]
 \leq
 (1-p_*^M) \nu( 12\lalpha^{-1}I ).
\end{align*}
 Writing $2^{-\gamma} = (1-p_*^M)$, $N = 12\lalpha^{-1} $, and $I=B(x,r)$
 \[
  2^{\gamma} \nu(B(x,r)) \leq \nu(B(x,N r))
 \]
 for all $x$ and $r$ with $\spt\nu \not\subset B(x,N r)$.
\end{proof}
\begin{corollary}
 \label{cor:lq_dim_Moran}
 Let $\nu$ be a Moran construction measure associated to a Moran construction $\{ E_\iii : \iii \in \Gamma_* \}$ satisfying \ref{M1}--\ref{M5}. Then for all $\eta\in (0,1)$ there exists $\eps=\eps(\eta,\beta,\underline{\alpha})>0$ so that $D(\mu\ast\nu,q) >  D(\mu,q)+\eps$ for any measure $\mu$ with $ D(\mu,q) < 1-\eta$.
\end{corollary}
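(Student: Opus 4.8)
The plan is to obtain this corollary as an immediate consequence of the preceding proposition together with Theorem \ref{thm:unif_perf_improving}, so the argument is essentially a one-liner once the relevant constants are tracked. First I would invoke the preceding proposition to conclude that the Moran construction measure $\nu$ is uniformly perfect. Since the corollary asserts a specific dependence of $\eps$, however, I would not merely quote the qualitative conclusion but read off explicit parameters from its proof: that proof produces $\nu$ which is $(N,\gamma)$-uniformly perfect with $N = 12\lalpha^{-1}$ and $\gamma>0$ determined by $2^{-\gamma} = 1 - p_*^M$, where $M$ is the least integer with $\diam E_\kk \le \tfrac14\lalpha\beta^{-1}$ for all $\kk \in \Gamma_M$. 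By \ref{M2} and compactness $M$ is finite, and the point to emphasize is that $N$ and $\gamma$ are controlled entirely by the data of the Moran construction (in particular by $\beta$ and $\lalpha$) and do not depend on $\mu$ or on the ambient exponent $q$.

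Having fixed $\eta \in (0,1)$, I would then feed $N$, $\gamma$, $\eta$ and $q$ into Theorem \ref{thm:unif_perf_improving}, which furnishes $\eps = \eps(q,N,\gamma,\eta) > 0$; unwinding the dependence of $N,\gamma$ on the construction, this $\eps$ depends only on $q$, $\eta$, $\beta$ and $\lalpha$, as claimed. Finally, for any $\mu \in \PP(\R)$ with $D(\mu,q) < 1 - \eta$ one has in particular $D(\mu,q) \le 1 - \eta$, so Theorem \ref{thm:unif_perf_improving} directly gives $D(\mu \ast \nu, q) > D(\mu,q) + \eps$, which is the assertion. There is no genuine obstacle here: the only step that deserves any attention is the bookkeeping confirming that the uniform perfectness constants extracted from the proof of the preceding proposition depend solely on the parameters named in the statement, and that this in turn pins down the dependence of $\eps$ through Theorem \ref{thm:unif_perf_improving}.
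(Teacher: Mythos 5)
Your proposal is correct and is exactly the argument the paper intends: the preceding proposition shows that $\nu$ is $(N,\gamma)$-uniformly perfect with constants determined by the Moran construction data, and Theorem~\ref{thm:unif_perf_improving} then supplies the $\eps$. The only caveat, which you inherit from the paper's own statement, is that the $\gamma$ produced by the proposition's proof depends on $p_*$ and on the integer $M$ (which is finite by \ref{M2} and compactness but not controlled by $\beta,\lalpha$ alone), so strictly speaking $\eps$ also depends on these quantities and on $q$, not only on $\eta,\beta,\lalpha$.
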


Let $\Phi = \{\fii_i\}_{i=1}^\kappa$ be a collection of contracting differentiable maps, say from some open neighborhood $V$ of $[0,1]$ to itself and suppose that $\Phi$ satisfies the so called bounded distortion property: there exists $\beta > 1$ so that $|\fii_\ii'(x)| \leq \beta |\fii_\ii'(y)|$ for all $x,y\in V$ and $\ii\in\Gamma_*$. Such an IFS is called self-conformal. From \cite[Lemma 2.1]{KaenmakiRossi2016} and \cite[Example 2.2]{KaenmakiRossi2016} we can see that self-conformal sets, which are not a singleton can always be represented by Moran constructions satisfying \ref{M1}--\ref{M5}. Hence we have the following corollary.
\begin{corollary}
 \label{cor:lq_dim_self_conf}
  If $\{ \fii_i \}_{i=1}^\kappa$ be a self-conformal iterated function system with attractor $E$, and assume that $E$ is not a singleton. Let $\tilde\nu$ be a measure on $\Gamma$ constructed from $(p_\ii)_{\ii\in\Gamma_*}$ satisfying \eqref{eq:conditions-measure}, and denote by $\nu$ the projection of $\tilde\nu$ to $E$. In particular, this holds if $\tilde\nu$ is a Bernoulli measure or a Gibbs measure associated to a H\"{o}lder potential.

  Then given $\eta\in (0,1)$ there is $\e>0$ such that  $D(\mu\ast\nu,q) >  D(\mu,q)+\eps$ for any measure $\mu$ with $ D(\mu,q) < 1-\eta$.
\end{corollary}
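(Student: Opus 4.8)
The plan is to recognize $\nu$ as a Moran construction measure satisfying \ref{M1}--\ref{M5} and then to quote Corollary \ref{cor:lq_dim_Moran} verbatim. So the first step is to build the construction. With $\Sigma=\{1,\dots,\kappa\}$ and $\Gamma_*$ the set of finite words, I would set $\fii_\ii=\fii_{i_1}\circ\cdots\circ\fii_{i_n}$ for $\ii=(i_1,\dots,i_n)$ and declare $E_\ii=\fii_\ii(E)$. Condition \ref{M1} is then immediate from $E=\bigcup_i\fii_i(E)$, since $E_{\ii j}=\fii_\ii(\fii_j(E))\subset\fii_\ii(E)=E_\ii$, and \ref{M2} follows from uniform contractivity of the $\fii_i$. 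The substance is in \ref{M3}--\ref{M5}: using the bounded distortion estimate $\diam \fii_\ii(A)\asymp\|\fii_\ii'\|_\infty\diam A$ together with $\|\fii_{\ii\jj}'\|_\infty\asymp\|\fii_\ii'\|_\infty\|\fii_\jj'\|_\infty$ from the chain rule, one obtains \ref{M3} and \ref{M4} with constants controlled by $\beta$ and $c_0:=\min_i\inf|\fii_i'|>0$ (this infimum is positive because bounded distortion forbids a vanishing derivative), and \ref{M5} by mapping two points of $E$ realizing $\diam E$ forward by $\fii_\ii$ and comparing with $\diam E_\ii$ — here is where the hypothesis that $E$ is not a singleton enters. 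This verification is exactly \cite[Lemma 2.1 and Example 2.2]{KaenmakiRossi2016}, which I would cite rather than reproduce.

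Next I would check that $\nu$ is a Moran construction measure in the sense demanded by Corollary \ref{cor:lq_dim_Moran}, i.e.\ that the one-step weights $p_\ii=\tilde\nu[\ii]/\tilde\nu[\iin{|\ii|-1}]$ satisfy \eqref{eq:conditions-measure}. The normalization $\sum_{\ii^-=\jj}p_\ii=1$ is automatic because $[\jj]$ is the disjoint union of the $[\jj i]$. For a Bernoulli measure $p_\ii$ depends only on the last symbol, so one takes $p_*=\min_i p_i$ and $p^*=\max_i p_i$; note $\kappa\ge 2$, as $\kappa=1$ would force $E$ to be a fixed point. For a Gibbs measure of a H\"older potential I would invoke the quasi-Bernoulli property $\tilde\nu[\ii\jj]\asymp\tilde\nu[\ii]\tilde\nu[\jj]$, which makes $p_{\jj i}=\tilde\nu[\jj i]/\tilde\nu[\jj]$ comparable to $\tilde\nu[i]$ uniformly in $\jj$; this bounds $p_{\jj i}$ below by some $p_*>0$, and then $p_{\jj i}\le 1-\sum_{i'\ne i}p_{\jj i'}\le 1-p_*=:p^*<1$ since $\kappa\ge 2$.

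With both ingredients in place, Corollary \ref{cor:lq_dim_Moran} applies to $\nu$ and produces, for each $\eta\in(0,1)$, an $\eps=\eps(\eta,\beta,\lalpha)>0$ with $D(\mu\ast\nu,q)>D(\mu,q)+\eps$ for every $\mu$ with $D(\mu,q)<1-\eta$, which is the assertion. I expect the only genuine obstacle to be the verification of \ref{M3}--\ref{M5} for the conformal construction: it is routine distortion bookkeeping but does require care, which is exactly why the statement is phrased through the external reference \cite{KaenmakiRossi2016}; checking the quasi-Bernoulli property of Gibbs measures is standard, and the rest is definition-chasing.
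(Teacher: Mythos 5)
Your proposal is correct and follows exactly the same route as the paper: recognize the self-conformal attractor as the limit set of a Moran construction satisfying \ref{M1}--\ref{M5} by citing \cite[Lemma 2.1 and Example 2.2]{KaenmakiRossi2016}, check that the weight system satisfies \eqref{eq:conditions-measure} (automatic from the hypothesis, with the Bernoulli and Gibbs cases as worked examples), and then invoke Corollary \ref{cor:lq_dim_Moran}. The paper simply states this in the paragraph preceding the corollary with no further detail; the extra detail you supply (explicit construction $E_\ii=\fii_\ii(E)$, distortion bookkeeping for \ref{M3}--\ref{M5}, quasi-Bernoulli verification for Gibbs measures) is accurate and harmless.
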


\section{Applications to dimensions of sumsets}\label{sec:sets}
From our results on $L^q$ norms of $2^{-m}$-measures, we can easily deduce results about the increase of upper box counting dimension under taking sumsets. Results of this kind were recently studied in \cite{FraserHowroydYu2018}, based on Hochman's  inverse theorem for entropy from \cite{Hochman2014}. As we discussed in the introduction, Hochman's inverse theorem cannot be used to obtain information on the improvement of $L^q$ dimension but, on the other hand, we can recover several results from \cite{FraserHowroydYu2018} from our main results. Along the way, we compare several notions of ``uniform largeness'' that appear in the literature: uniform perfectness (for sets),  positive lower dimension and positive thickness. We also review a beautiful result of S. Astels that appears to be little known.

If $A,B\subset \R$, then their sumset $A+B$ is defined as $\{a+b:a\in A, b\in B\}$. We denote the $n$-fold sum of $A$ by $n A$ (this is \emph{not} multiplying each element of $A$  by $n$). If $A$ is bounded the upper box counting dimension of $A$, denoted by $\udimb A$, is defined as
\[
 \limsup_{m\to\infty}\frac{\log |N_m(A)|}{m}.
\]
We can also define $\udimb A$ for unbounded sets $A$ by taking supremum of $\udimb A'$ over bounded $A'\subset A$. Taking closure does not affect $\udimb$, and therefore we may always assume our sets to be compact.

To pass from measure results to set results, the idea is to consider at each level $m$ the uniform measure $\mu$ on $A^{(m)}$, where $A^{(m)}$ is the $2^{-m}$-discretization of $A$, defined (similarly to the case of measures) as follows:  $k 2^{-m} \in A^{(m)}$ if and only if $A\cap [k 2^{-m},(k+1)2^{-m})\neq\varnothing$. It is then obvious that $\| \mu \|_2^{-2} = N_m(A) = |A^{(m)}|$. From the definition of convolution of measures it follows that that $\spt(\mu\ast\nu) = \spt\mu + \spt\nu$. Recall also that among all probability measures supported on a given $2^{-m}$-set, the uniform measure has the smallest $L^q$ norm.

It follows immediately from the definitions that the support of a $(N,\gamma)$-uniformly perfect measure is a uniformly perfect set, with constant $N$. Moreover, a compact uniformly perfect set supports a uniformly perfect measure. This follows since a compact subset of $\R$ supports a doubling measure(more precisely, there is a constant $C$ so that every closed subset of $\R$ carries a doubling measure whose doubling constant is $C$; see \cite{LuukkainenSaksman1998}), and then Lemma \ref{lem:doubling_to_unif_perf} finishes the deduction. The same result holds also in a more general setting, see
 \cite[Corollary 3.3]{KaenmakiLehrback2017}.

Let $F\subset\R$ and suppose that for $t\geq 0$ there exists a constant $c_t>0$ so that if $0<r<R< \diam(F)$, then for every $x\in F$ at least $c_t(r/R)^{-t}$ balls of radius $r$ are needed to cover $B(x,R)$. The supremum of such $t$ is called the lower dimension of $F$ and we denote it by $\diml F$. In the literature, for example in \cite{KaenmakiLehrbackVuorinen2013}, this has also been called the lower Assouad dimension of $F$.

Let $F\subset\R$ be a compact set. Then the complement of $F$ is a disjoint countable union of open intervals, two of which are unbounded. Denote the collection of the bounded ones by $\OO$, and also add $\varnothing$ to the collection. Set $\Sigma=\{0,1\}$. And let $\Gamma^*$ be the corresponding set of finite words, as in \S\ref{sec:applications}. Set $I_\varnothing$ to be the smallest closed interval containing $F$. Suppose $I_\ii$ has been defined and that $I_\ii$ contains elements of $\OO$. Choose a bounded interval $O_\ii\in\OO$ contained in $I_\ii$ and set $I_{\ii0} \cup O_\ii \cup I_{\ii1} = I_\ii$, where $I_{\ii0} $ and $ I_{\ii1}$ are the obvious intervals. If $I_\ii$ did not contain elements of $\OO$, then set $O_\ii = \varnothing$ and divide $I_\ii$ into closed intervals $I_{\ii0}$ and $I_{\ii1}$ of equal length. Continuing inductively will produce a function $\DD$ from $\Gamma_*$ to $\OO$, by $\DD(\ii) = O_\ii$, and we have that
\[
 F = I_\varnothing \setminus \bigcup_{\ii\in\Gamma_*} O_\ii.
\]
Note also that $\{I_\ii\}_{\ii\in\Gamma*}$ is a Moran construction with limit set $F$. The open intervals $O_\ii$ are called gaps, and the closed intervals $I_\ii$ are called bridges. The labeling $\DD$ is called the derivation of $F$. If $\DD(\ii)\neq\varnothing$ we say that $I_\ii$ splits. If $I_\ii$ splits set
\[
 \tau_\DD(\ii) = \min\left\{ \frac{ \diam(I_{\ii0}) }{ \diam(O_\ii) } , \frac{ \diam(I_{\ii1})}{ \diam(O_\ii) } \right\}
\]
and set $\tau_\DD(\ii)=\infty$ otherwise. We define the thickness of the derivation $\DD$ to be $\tau_\DD = \inf_{\ii\in\Gamma*} \tau_\DD(\ii)$. Finally,  we define the thickness of $F$ to be
\[
 \tau(F) = \sup\{ \tau_\DD : \DD \text{ is a derivation of } F \}.
\]
Note that $\tau(F)\in [0,+\infty]$. If $F$ contains an isolated point, then $\tau(F) = 0$. On the other hand $\tau(F) = \infty$ if and only if $F$ is an interval (see \cite[Lemma 2.1]{Astels2000}). For compact sets, positive thickness, uniform perfectness, and positive lower dimension are equivalent concepts in a quantitative way:

\begin{proposition} \label{prop:equivalence-thickness}
 \label{prop:equvalent}
 Let $C\subset\R$ be compact. Then the following claims hold:
 \begin{enumerate}
  \item\label{enu:uni_to_lower} If $C$ is uniformly perfect with constant $K$, then $\diml C > (\log 2 K +1)^{-1}$.
  \item\label{enu:lower_to_uni} If $\diml C > t$, and the involved constant for $t$ is $c_t>0$, then $C$ is uniformly perfect with constant $K = \sqrt[t]{2/c_t}$.
  \item\label{enu:uni_to_thickness} If $C$ is uniformly perfect with constant $K$, then $\tau(C) \geq K^{-1}$
  \item\label{enu:thickness_to_uni} If $\tau(C) > 2 (K-1)^{-1}$, then $C$ is uniformly perfect with constant $K$.
 \end{enumerate}
\end{proposition}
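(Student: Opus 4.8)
The plan is to prove the four implications essentially by direct geometric estimates, exploiting that on the line ``scales'' can be analyzed interval by interval, and that uniform perfectness of a set $C$ translates into: whenever an annulus $B(x,Kr)\setminus B(x,r)$ around a point of $C$ does not already contain all of $C$, it meets $C$. For \eqref{enu:uni_to_lower}, I would take $x\in C$ and $0<r<R<\diam C$, and iterate the uniform perfectness condition: starting from scale $r$ and multiplying by $K$ at each step, one produces a chain of points of $C$ at geometrically increasing distances from $x$, all inside $B(x,R)$, giving roughly $\log_K(R/r)$ points that are $r$-separated (after adjusting constants), hence one needs at least $c\,(R/r)^{1/\log(2K)}$ balls of radius $r$; the $+1$ in the exponent absorbs the loss from passing between ``$Kr$'' and ``$r$'' scales. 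For \eqref{enu:lower_to_uni}, I would argue contrapositively: if $C$ is \emph{not} $K$-uniformly perfect, there is $x\in C$ and $r$ with $C\not\subset B(x,Kr)$ but $C\cap(B(x,Kr)\setminus B(x,r))=\varnothing$; then $B(x,Kr)\cap C=B(x,r)\cap C$, so covering $B(x,Kr)$ requires no more than the number of $r$-balls needed to cover $B(x,r)\cap C$, which is at most $2$ if we are slightly generous — more precisely we get a contradiction with $\diml C>t$ by plugging $R=Kr$, since we would need $c_t K^t$ balls of radius $r$ but one suffices on the relevant piece; solving $c_t K^t\le$ (a small constant) gives the stated $K=\sqrt[t]{2/c_t}$.

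For \eqref{enu:uni_to_thickness}, the plan is to build a derivation $\DD$ of $C$ of thickness $\ge K^{-1}$ greedily: at each bridge $I_\ii$ that still contains gaps, choose as $O_\ii$ the \emph{largest} complementary gap inside $I_\ii$. I then need to check $\tau_\DD(\ii)\ge K^{-1}$, i.e. that each of the two child bridges $I_{\ii0},I_{\ii1}$ has diameter at least $K^{-1}\diam(O_\ii)$. Suppose not, say $\diam(I_{\ii 0})<K^{-1}\diam(O_\ii)$; let $x$ be the endpoint of $O_\ii$ on the $I_{\ii0}$ side, which lies in $C$, and set $r=\diam(I_{\ii0})$ (or a comparable quantity); then the annulus $B(x,Kr)\setminus B(x,r)$ would have to be disjoint from $C$ on the gap side while $C\not\subset B(x,Kr)$ because $O_\ii$ was the largest gap and the bridge $I_{\ii1}$ carries mass of $C$ beyond $O_\ii$ — contradicting uniform perfectness. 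Some care is needed at the very first bridge $I_\varnothing$ and to handle the two unbounded complementary intervals, but these are boundary bookkeeping issues, not substantive ones. For \eqref{enu:thickness_to_uni}, I would use a near-optimal derivation $\DD$ with $\tau_\DD>2(K-1)^{-1}$ and, given $x\in C$ and $r$ with $C\not\subset B(x,Kr)$, locate the smallest bridge $I_\ii$ in the derivation tree that contains $x$ and has diameter comparable to (at least a fixed multiple of) $r$; since $C\not\subset B(x,Kr)$, walking up the tree one reaches an ancestor bridge that splits with a gap $O_\jj$ whose two child bridges are each of length $\ge\tau_\DD\diam(O_\jj)$, and tracing the geometry shows a point of $C$ must fall in $B(x,Kr)\setminus B(x,r)$ once $\tau_\DD>2(K-1)^{-1}$ — the inequality is exactly what makes the sibling bridge reach into the annulus without overshooting past radius $Kr$.

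I expect the main obstacle to be \eqref{enu:thickness_to_uni} (and to a lesser extent \eqref{enu:uni_to_thickness}): turning the thickness condition, which is a statement about a \emph{single well-chosen} derivation, into the uniform-perfectness condition, which must hold at \emph{every} point and scale, requires carefully matching an arbitrary pair $(x,r)$ to the right bridge in the derivation tree and controlling the multiplicative constants so that ``each child bridge is at least $\tau_\DD$ times the gap'' yields precisely the factor $K$ in $B(x,Kr)$; the bound $2(K-1)^{-1}$ suggests one loses a factor $2$ from the two-sided nature of bridges and that $K-1$ rather than $K$ appears because the relevant point of $C$ sits at the far end of a child bridge adjacent to the gap. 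The other three implications are comparatively routine iterations or contrapositives, so most of the write-up effort will go into setting up the derivation-tree navigation cleanly enough that \eqref{enu:uni_to_thickness} and \eqref{enu:thickness_to_uni} become short.
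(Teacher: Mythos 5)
Your treatment of parts (3) and (4) broadly tracks the paper. For (3) the construction is the same: build a derivation greedily by always removing the largest remaining gap, so that the gaps enclosing any bridge $I_\ii$ from outside are at least as large as $O_\ii$, and then apply uniform perfectness at an endpoint of the shorter child bridge (with $r$ taken a hair larger than its diameter, to sidestep open-ball boundary effects, which your ``or a comparable quantity'' gestures at) to rule out $\diam(I_{\ii j}) < K^{-1}\diam(O_\ii)$. For (4), however, the paper argues by \emph{contrapositive}: if uniform perfectness fails at some $(x,r)$, each half of the annulus $B(x,Kr)\setminus B(x,r)$ lies inside a complementary gap of length $\ge (K-1)r$, and in \emph{any} derivation whichever of these two gaps is removed second bounds a bridge of diameter $\le 2r$, forcing $\tau_\DD \le 2/(K-1)$ for every $\DD$. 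Your proposed direct route (descend to a bridge of size $\asymp r$, climb to the first gap-splitting ancestor, inspect the sibling) is much vaguer, and the step you flag as delicate is indeed the crux: without first observing that the annulus must be filled by gaps, you have no a priori control on the size of that ancestor's gap, or of the sibling bridge, relative to $r$, so you cannot conclude the sibling lands inside the annulus. For (1) and (2) the paper proves nothing at all — it cites \cite[Lemma 2.1]{KaenmakiLehrbackVuorinen2013}. Your contrapositive for (2) is essentially the right argument there.

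There is a genuine logical gap in your sketch of part (1). Iterating uniform perfectness at the single base point $x$ produces, as you note, a chain of roughly $\log_K(R/r)$ points of $C$ at geometrically increasing distances from $x$, and this only forces $N(B(x,R)\cap C, r) \ge c\log_K(R/r)$. But your stated conclusion, $N \ge c(R/r)^{1/\log(2K)}$, is a \emph{power} of $R/r$, which for large $R/r$ is far larger than $\log_K(R/r)$; the implication simply does not follow, and ``the $+1$ absorbs the loss'' cannot fix this, since the discrepancy is between a logarithm and a power, not between two constants in an exponent. A chain rooted at one point only yields $\diml C \ge 0$. To obtain a positive lower dimension one must \emph{branch}: uniform perfectness at scale $\rho$ applied at a point $y\in C$ produces a second point $z\in C$ with $B(y,\rho/2)$ and $B(z,\rho/2)$ disjoint, both inside $B(y,2K\rho)$; iterating this at \emph{every} point already produced gives a binary Cantor-type subtree with $2^n$ well-separated points after $n$ scale reductions by a factor $\asymp 4K$. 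That is precisely the source of the exponent $(\log 2K+1)^{-1} = 1/\log(4K) = \log 2/\log(4K)$: it is the box dimension of a binary tree with scale ratio $4K$, not a count along a single chain.
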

\begin{proof}
 The parts \eqref{enu:uni_to_lower} and \eqref{enu:lower_to_uni} are in the proof of \cite[Lemma 2.1]{KaenmakiLehrbackVuorinen2013}.

 To prove \eqref{enu:uni_to_thickness}, assume that $C$ is uniformly perfect with constant $K>1$. It is enough to construct a derivation of $C$ with $\tau_\DD \geq K^{-1}$. Let $I_\varnothing$ be the smallest closed interval containing $C$. Assume that $I_\ii$ has been constructed. Assume that $I_\ii$ splits, and choose $O_\ii$ to be the maximal open interval in the complement of $C$ contained in $I_\ii$, and let $I_{\ii0}$ and $I_{\ii1}$ be the obvious left and right intervals - possibly singletons. Since we have always chosen a maximal gap, the gaps on both sides of $I_\ii$ are larger than $O_\ii$. Hence, by uniform perfectness we have that
 \[
  \tau_{\DD}(\ii)
  =
  \min\left\{ \frac{ \diam(I_{\ii0}) }{ \diam(O_\ii) } , \frac{ \diam(I_{\ii1})}{ \diam(O_\ii) } \right\}
  \geq
  \frac{ \diam(I_{\ii j}) }{ K \diam(I_{\ii j})} = \frac{1}{K},
 \]
 for some $j\in\{0,1\}$. If $I_\ii$ did not split, then $\tau_\DD(\ii)=\infty$. All in all, $\tau (C) \geq 1/K$.

 To prove \eqref{enu:thickness_to_uni}, assume that $C$ has thickness $>2 K^{-1}$, but is not uniformly perfect with constant $K$. Then there exists $x\in C$ and $r>0$ so that $C \setminus B(x,r) \neq \varnothing$, but $B(x,Kr)\setminus B(x,r)$ does not meet $C$. Let $\DD$ be any derivation of $C$. Note that $B(x, K r)\setminus B(x,r)$ must be contained in a union of two different gaps $O_\ii$ and $O_\jj$. Without loss of generality, assume that $O_\jj$ is to the left of $x$ and that in the derivation $O_\jj$ is removed before $O_\ii$. Then the bridge $I_{\ii0}$ satisfies $\diam I_{\ii0} \leq 2r$, and on the other hand we have $\diam O_\ii \geq (K-1)r$. Hence for all derivations $\DD$ of $C$ have $\tau_\DD \leq 2/(K-1)$, implying $\tau(C) \leq 2/(K-1)$, which contradicts our assumption.
\end{proof}

For the sake of example, we show how to derive \cite[Theorem 2.1]{FraserHowroydYu2018} from our Proposition \ref{prop:lq_norm_unif_perf}.
\begin{theorem}[{\cite[Theorem 2.1]{FraserHowroydYu2018}}]
 \label{thm:fraser_etal}
 If $\dim_L F_2 > 0$ and $\udimb F_1 < 1$, then $\udimb F_1 < \udimb(F_1 + F_2)$.
\end{theorem}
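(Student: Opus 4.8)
The plan is to reduce Theorem~\ref{thm:fraser_etal} to Proposition~\ref{prop:lq_norm_unif_perf} via the standard dictionary between box dimension of a set and the $L^2$ norm of the uniform measure on its dyadic discretization, exactly as outlined in the paragraph preceding the statement. First I would fix bounded compact pieces and pass to discretizations: for each $m$, let $A=A_m$ be the uniform measure on $F_1^{(m)}$ and $\nu=\nu_m$ the uniform measure on $F_2^{(m)}$, so that $\|A\|_2^{-2}=N_m(F_1)$ and, since $\spt(A*\nu)\subseteq F_1^{(m)}+F_2^{(m)}$ and the uniform measure minimizes the $L^2$ norm among probability measures on a given $2^{-m}$-set, one gets $N_m(F_1+F_2)\ge \|A*\nu\|_2^{-2}$ (up to a harmless bounded factor coming from passing between $(F_1+F_2)^{(m)}$ and $F_1^{(m)}+F_2^{(m)}$, which differ by a bounded number of dyadic translates). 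Thus an inequality $\|A*\nu\|_2<2^{-\eps m}\|A\|_2$ for infinitely many $m$ immediately upgrades to $\udimb(F_1+F_2)\ge \udimb F_1+2\eps$.

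Next I would verify the two hypotheses of Proposition~\ref{prop:lq_norm_unif_perf} (with $q=2$) at the discrete level. Since $\dim_L F_2>0$, by Proposition~\ref{prop:equvalent}\eqref{enu:lower_to_uni} the set $F_2$ is uniformly perfect with some constant $K$; restricting to a bounded sub-piece of positive diameter does not destroy this, and then $F_2^{(m)}$ (equivalently, $\nu_m$) is $(2K+1,\gamma)$-uniformly perfect for all large $m$ with $\gamma$ depending only on $K$, by the discretization remarks in \S\ref{sec:proofs}. Fix $a>0$ smaller than the diameter of the chosen piece of $F_2$. For the measure $\mu$ side, $\udimb F_1<1$ means $N_m(F_1)\le 2^{m(1-\eta)}$ for all large $m$ and some $\eta>0$, which says precisely $\|A_m\|_2=N_m(F_1)^{-1/2}\ge 2^{-m(1-\eta)/2}=2^{-m(1-\eta)/q'}$ with $q'=2$; so $A_m$ has the required lower bound on its $L^2$ norm. (Here one must be slightly careful: $\udimb$ is a $\limsup$, so the bound $N_m(F_1)\le 2^{m(1-\eta)}$ really does hold for \emph{all} large $m$, which is what we need; this is the reason for working with $\udimb$ and not $\ldimb$.) Then Proposition~\ref{prop:lq_norm_unif_perf} with $q=2$ gives $\eps=\eps(2,K,\gamma,\eta)>0$ and $m_0$ such that $\|A_m*\nu_m\|_2<2^{-\eps m}\|A_m\|_2$ for all $m\ge m_0$.

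Finally I would combine the two steps: for every large $m$,
\[
N_m(F_1+F_2)\ \ge\ c\,\|A_m*\nu_m\|_2^{-2}\ >\ c\,2^{2\eps m}\|A_m\|_2^{-2}\ =\ c\,2^{2\eps m} N_m(F_1),
\]
where $c>0$ absorbs the bounded discretization loss; taking $\frac1m\log$ and $\limsup$ yields $\udimb(F_1+F_2)\ge \udimb F_1+2\eps>\udimb F_1$, as claimed. The main obstacle, such as it is, is purely bookkeeping: matching the normalization conventions (the factor $q'=2$, the passage between $(F_1+F_2)^{(m)}$ and $F_1^{(m)}+F_2^{(m)}$, and between $\alpha$-porosity/uniform-perfectness for sets and the discrete $2^{-m}$-versions) and making sure that the inequality $\udimb F_1<1$ is exploited as a bound valid for all large $m$ rather than a subsequential one, so that Proposition~\ref{prop:lq_norm_unif_perf} can be applied at every scale; there is no serious analytic difficulty once the inverse-theorem machinery of Proposition~\ref{prop:lq_norm_unif_perf} is in hand.
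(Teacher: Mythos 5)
There is a genuine gap in your choice of $\nu_m$. You take $\nu_m$ to be the \emph{uniform counting measure} on $F_2^{(m)}$ and assert that it is $(2K+1,\gamma)$-uniformly perfect ``by the discretization remarks in \S\ref{sec:proofs}''. But those remarks concern $\nu^{(m)}$, the discretization of a \emph{fixed measure} $\nu$ that is already uniformly perfect. Uniform perfectness of the set $F_2$ does not imply that the normalized counting measure on $F_2^{(m)}$ is uniformly perfect with constants independent of $m$, because uniform perfectness of a set is a purely metric condition (no large gaps), whereas uniform perfectness of a measure is a non-concentration condition. A concrete counterexample: let $F_2 = [0,\tfrac14]\cup C$ with $C\subset[\tfrac12,1]$ a $\tfrac12$-dimensional Cantor set. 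Then $F_2$ is uniformly perfect, but $|F_2^{(m)}|\approx 2^{m-2}+2^{m/2}$, so the counting measure $\nu_m$ satisfies $\nu_m\bigl(B(\tfrac18,\tfrac18)\bigr)\to 1$ while $\spt\nu_m\not\subset B(\tfrac18,N\cdot\tfrac18)$ for any $N<7$; hence $\nu_m\bigl(B(\tfrac18,N\cdot\tfrac18)\bigr)/\nu_m\bigl(B(\tfrac18,\tfrac18)\bigr)\to 1$, and no fixed $(N,\gamma)$ works for all large $m$.

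The paper's proof avoids exactly this pitfall: instead of the counting measure on the discretized set, it first produces an $(N,\gamma)$-uniformly perfect measure $\nu$ \emph{supported on the continuum set $F_2$}. This uses the fact that every compact subset of $\R$ carries a doubling measure (Luukkainen--Saksman) together with Lemma \ref{lem:doubling_to_unif_perf}, which upgrades a doubling measure on a uniformly perfect set to a uniformly perfect measure. Only then does one discretize, obtaining that $\nu^{(m)}$ is $(2N+1,\gamma)$-uniformly perfect and is supported on $F_2^{(m)}$, which is all Proposition \ref{prop:lq_norm_unif_perf} requires (the $F_1$ side does not need uniform perfectness, so the counting measure there is fine, as you observe). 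The rest of your argument --- the $L^2$-norm dictionary for $F_1$, the Cauchy--Schwarz step, the observation that $\udimb F_1<1$ yields a bound valid for \emph{all} large $m$, and the bounded-loss bookkeeping between $(F_1+F_2)^{(m)}$ and $F_1^{(m)}+F_2^{(m)}$ --- is correct and matches the paper; replacing your $\nu_m$ by $\nu^{(m)}$ for a uniformly perfect $\nu$ on $F_2$ closes the gap.
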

\begin{proof}
By the usual rescaling and translating, we may assume that $F_1, F_2\subset [0,1)$. For each $m$, let $\mu^{(m)}$ be the uniform probability measure on $F_1^{(m)}$. We have that $\|\mu^{(m)}\|_2^2 = |F_1^{(m)}|^{-1}$ and $\udimb F_1 <  1-\eta$, so $\|\mu^{(m)}\|_2^2 \geq 2^{-(1-\eta) m}$ for all $m$ large enough.

Since we may assume $F_2$ to be compact, and it is uniformly perfect by Proposition \ref{prop:equivalence-thickness}, it supports an $(N,\gamma)$-uniformly perfect measure $\nu$. Thus for each $m$ we have that $\nu^{(m)}$ is an $(2N+1,\gamma)$-uniformly perfect measure on $F_2^{(m)}$. Now the assumptions of Proposition \ref{prop:lq_norm_unif_perf} are satisfied for all $m$ large enough (and the constants $\eta,N$, and $\gamma$ are independent of $m$), so there exists $\eps>0$ so that
\begin{equation}
 \label{eq:lq_boxdimension_improving}
 \| \mu^{(m)} \ast \nu^{(m)} \|_2 \leq 2^{-\eps m} \|\mu^{(m)}\|_2.
\end{equation}
for all $m$ large enough. Using  Cauchy-Schwarz, this yields
\[
2^{\eps m} |F_1^{(m)}| = 2^{-\eps m} \|\mu^{(m)}\|_2^{-2} \le \| (\mu\ast\nu)^{(m)}\|_2^{-2} \leq |(F_1+F_2)^{(m)}|,
\]
always assuming $m$ is large enough. Taking logarithms, dividing by $m$ and taking limsup as $m\to\infty$ we obtain the claim.
\end{proof}
From this result one can easily deduce that if $F$ satisfies $\diml F > 0$, then $\udimb (n F)$ converges to one. The stronger statement that $\diml(n F)\to 1$ was proved in \cite[Theorem 2.6]{FraserHowroydYu2018}, but actually a much stronger result is true: after finitely many steps, the sumset becomes an interval. This follows by work of Astels, in particular from \cite[Theorem 2.4]{Astels2000}, on the sumsets of Cantor sets with thickness bounded from below, which we restate for convenience:
\begin{theorem} \label{thm:Astels}
Let $C_1,\ldots,C_k$ be Cantor sets on $\R$ such that
\[
\sum_{n=1}^k \frac{\tau(C_n)}{\tau(C_n)+1} \ge 1.
\]
Then $C_1+\ldots+C_k$ contains an interval. Moreover, if the largest gap of any of the $C_n$ is smaller or equal than the smallest diameter of any of the $C_n$, then $C_1+\ldots+C_k$ is an interval. In particular, this is the case is all the $C_k$ are the same set $C$ and $\tau(C)\ge 1/(n-1)$.
\end{theorem}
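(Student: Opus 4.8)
Theorem \ref{thm:Astels} is a restatement of \cite[Theorem 2.4]{Astels2000} (its last sentence being an elementary consequence), so my first inclination is simply to invoke that reference; if a self-contained argument were wanted, the plan is to follow Astels' approach, which I outline now. After translating one may assume $\min C_n = 0$ for every $n$, and for each $n$ one fixes a derivation $\DD_n$ of $C_n$ of thickness $\tau_n$, with $\tau_n$ close enough to $\tau(C_n)$ (the extremal case $\sum_n \tau(C_n)/(\tau(C_n)+1)=1$ requiring a routine limiting argument); put $\gamma_n = \tau_n/(\tau_n+1)$, so that $\sum_{n=1}^k \gamma_n \ge 1$. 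The goal is then to show that every $t$ in a suitable target interval $J$ decomposes as $t = x_1 + \cdots + x_k$ with $x_n \in C_n$: for the first assertion $J$ will be a small interval interior to $\conv(C_1) + \cdots + \conv(C_k)$, and for the second assertion $J$ will be all of $\conv(C_1) + \cdots + \conv(C_k) = [0, \sum_n \diam C_n]$, the gap/diameter hypothesis being used to control the endpoints.

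Fixing $t \in J$, I would build recursively a nested sequence of $k$-tuples of bridges $(B_1^{(j)}, \ldots, B_k^{(j)})$, $B_n^{(j+1)} \subseteq B_n^{(j)} \in \DD_n$, with $t \in B_1^{(j)} + \cdots + B_k^{(j)}$ and $\max_n \diam B_n^{(j)} \to 0$. This suffices, since then $\{x_n\} := \bigcap_j B_n^{(j)} \subseteq C_n$ and $\diam(B_1^{(j)} + \cdots + B_k^{(j)}) = \sum_n \diam B_n^{(j)} \to 0$ while $t$ remains in the sum, forcing $t = x_1 + \cdots + x_k$. For the recursion step: pick $n_0$ realizing $\max_n \diam B_n^{(j)}$, split $B_{n_0}^{(j)} = B' \cup O \cup B''$ into its two child bridges and the removed gap $O$, and set $S = \sum_{n\ne n_0} B_n^{(j)}$. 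If $t \in S + B'$ or $t \in S + B''$ one refines the $n_0$-th coordinate and keeps the rest. The genuine difficulty is the case $t \in S + O$ with $t \notin S + B'$ and $t \notin S + B''$: now $t$ sits over the gap, and one must instead refine some coordinate $n \ne n_0$ to shift $S$ so that $t$ once more lies over a bridge.

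Resolving this last case is the crux, and it is precisely where $\sum_n \gamma_n \ge 1$ enters: whenever a bridge of length $\ell$ splits off a gap of length $g'$ the surviving child has length $\ge \tau_n g' = \tfrac{\gamma_n}{1-\gamma_n}\,g'$, so the total reach of the child bridges of the coordinates $n \ne n_0$, measured against the offending gap $O$, is governed by $\sum_{n\ne n_0}\gamma_n \ge 1 - \gamma_{n_0}$; this should force some child bridge of some $n\ne n_0$ still to contain a preimage of $t$ under partial addition, and after finitely many such moves $t$ again lies over a bridge-tuple. One also has to interleave honest refinements of the currently largest bridge so that $\max_n \diam B_n^{(j)} \to 0$. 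Making this scheme precise — deciding which gap to charge to which coordinate, and verifying the process never stalls — is the main obstacle, and is the heart of Astels' argument. The remaining claims are then immediate: running the construction with $J = \conv(C_1) + \cdots + \conv(C_k)$ under the gap/diameter hypothesis yields $C_1 + \cdots + C_k = J$, an interval; and if all $C_n = C$ with $\tau(C) \ge 1/(n-1)$, then $\gamma(C) = \tau(C)/(\tau(C)+1) \ge 1/n$, whence $\sum_{j=1}^n \gamma(C) = n\,\gamma(C) \ge 1$, while the largest gap of $C$ is trivially at most $\diam C$, so the gap/diameter hypothesis holds and $nC$ is an interval.
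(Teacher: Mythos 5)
Your proposal is correct and takes essentially the same route as the paper: the paper itself gives no proof of this theorem but simply restates \cite[Theorem 2.4]{Astels2000} for the reader's convenience, which is exactly your primary move. Your verification of the final sentence (that with $n$ copies of $C$ and $\tau(C)\ge 1/(n-1)$ one gets $\sum_{j=1}^n \tau(C)/(\tau(C)+1)\ge 1$, and the largest gap is trivially bounded by $\diam C$) is the only content the paper leaves implicit, and you handle it correctly; the extended sketch of Astels' bridge-refinement argument goes beyond what the paper attempts, and you are appropriately candid that the crux of that argument is not being reproduced.
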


\begin{corollary}
 \label{cor:interval}
 If $(C_n)$ is a sequence of compact uniformly perfect sets in $\R$ with common constant $K$ such that the largest of their gaps is at most the smallest of their diameters, then there exists a finite $n=n(K)$ such that $\sum_{i=1}^n C_i$ is an interval.

 In particular, if a compact set $C\subset\R$ satisfies $\diml C > 0$ (or, equivalently, is uniformly perfect or has positive thickness), then there exists a finite $n$ such that $n C$ is an interval. Thus, this is the case if $C$ is an $\alpha$-regular set with $\alpha>0$.
\end{corollary}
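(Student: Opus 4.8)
The plan is to deduce everything from Astels' Theorem \ref{thm:Astels}, using the thickness bound supplied by Proposition \ref{prop:equivalence-thickness}. First I would record that each $C_i$, being uniformly perfect with constant $K$, satisfies $\tau(C_i)\ge K^{-1}$ by claim \eqref{enu:uni_to_thickness} of Proposition \ref{prop:equivalence-thickness}. Since $t\mapsto t/(t+1)$ is increasing on $[0,\infty)$, this gives
$\tau(C_i)/(\tau(C_i)+1)\ge K^{-1}/(K^{-1}+1)=1/(K+1)$ for every $i$.

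Next I would set $n=n(K)=\lceil K+1\rceil$, so that $\sum_{i=1}^{n}\tau(C_i)/(\tau(C_i)+1)\ge n/(K+1)\ge 1$. The hypothesis that the largest gap among the sets in the sequence is at most the smallest of their diameters passes verbatim to the finite subcollection $C_1,\dots,C_n$, so the ``moreover'' clause of Theorem \ref{thm:Astels} applies and yields that $C_1+\dots+C_n$ is an interval. This proves the first assertion.

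For the ``in particular'' statement, I would first invoke the equivalences in Proposition \ref{prop:equivalence-thickness} (parts \eqref{enu:uni_to_lower}--\eqref{enu:thickness_to_uni}) to reduce the conditions $\diml C>0$, uniform perfectness, and positive thickness to a single one, say uniform perfectness with some constant $K$. When all summands equal $C$, any bounded complementary interval (gap) of $C$ lies inside the convex hull of $C$ with a point of $C$ on either side, hence has length strictly less than $\diam C$; thus the gap-versus-diameter hypothesis of the first part is automatic. Applying the first part (equivalently, the last sentence of Theorem \ref{thm:Astels}, since $\tau(C)\ge K^{-1}\ge 1/(n-1)$ once $n\ge K+1$) then gives that $nC$ is an interval. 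Finally, an Ahlfors $\alpha$-regular set with $\alpha>0$ carries an $\alpha$-regular measure, which is uniformly perfect by Lemma \ref{lem:Ahlfors_is_unif_perf}, and the support of a uniformly perfect measure is uniformly perfect; hence the previous conclusion applies to such sets.

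I do not expect a genuine obstacle here: every ingredient is already in place, and the argument is essentially bookkeeping. The two points worth a line of care are fixing $n$ in terms of $K$ (any $n\ge K+1$ works, e.g. $n=K+2$) and the observation that the gap/diameter condition becomes vacuous when all the sets coincide with a single compact set.
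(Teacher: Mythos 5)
Your proof is correct and supplies exactly the routine bookkeeping that the paper leaves implicit: the paper simply states Corollary~\ref{cor:interval} right after Theorem~\ref{thm:Astels} and Proposition~\ref{prop:equivalence-thickness} without spelling out the calculation, and your derivation (thickness bound $\tau(C_i)\ge K^{-1}$ from part~\eqref{enu:uni_to_thickness}, monotonicity of $t\mapsto t/(t+1)$, the resulting choice $n=\lceil K+1\rceil$, and the Astels ``moreover'' clause) is precisely the intended chain of reasoning. Your additional observations --- that the gap/diameter hypothesis is vacuous for a single repeated set, and that the Ahlfors-regular case reduces to uniform perfectness via Lemma~\ref{lem:Ahlfors_is_unif_perf} --- are correct and worth recording.
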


\begin{remark}
While Theorem \ref{thm:fraser_etal} does not seem to follow directly from Corollary \ref{cor:interval}, the version of Theorem \ref{thm:fraser_etal} in which upper box dimension is replaced by lower box dimension does follow from Corollary \ref{cor:interval} together with the version of the Pl\"{u}nnecke inequalities for box dimension given in \cite[Proposition 1]{ShmerkinSchmeling2010}.
\end{remark}

\begin{remark}
\label{rem:sets:_and_measures}
If $(C_n)$ is a sequence of compact sets with $\diml C_n > \delta > 0$ for all $n\in\N$ then we do not know whether $\udimb( \sum_{i=1}^n C_i) $ necessarily converges to $1$. If $\diml C_n > \delta$ for all $n\in\N$ but the constant  in the definition of uniform perfectness blows up very rapidly, then we can not apply the results of Astels or use Proposition  \ref{prop:lq_norm_unif_perf}. On the other hand, we have not been able to find a counterexample.
\end{remark}

\begin{remark}
Note that one cannot hope to get a result similar to Corollary \ref{cor:interval} for measures. If $C$ is the familiar middle $\tfrac13$-Cantor set and $\mu$ the uniform Cantor-Lebesgue measure on $C$, then it holds that already $C+C$ is an interval, but $\mu^{\ast n}$ is purely singular to Lebesgue measure for every $n$. Indeed, it is well known that the Fourier transform $\widehat{\mu}(\xi)$ does not tend to $0$ as $\xi\to\infty$ (this follows since $\widehat{\mu}(3\xi)=\widehat{\mu}(\xi)$ for all $\xi$), so the same holds for $\mu^{\ast n}$ by the convolution formula, and by the Riemann-Lebesgue Lemma $\mu^{\ast n}$ cannot be absolutely continuous. But $\mu^{\ast n}$ is still self-similar, so by the well-known law of pure type it must be purely singular.
\end{remark}


\bibliographystyle{plain}
\bibliography{improving_refs}

\begin{thebibliography}{10}

\bibitem{Astels2000}
S.~Astels.
\newblock {Cantor sets and numbers with restricted partial quotients}.
\newblock {\em Trans. Amer. Math. Soc.}, (352):133--170, 2000.

\bibitem{Christ85}
Michael Christ.
\newblock {A convolution inequality concerning {C}antor-{L}ebesgue measures}.
\newblock {\em Rev. Mat. Iberoamericana}, 1(4):79--83, 1985.

\bibitem{DooleyHareRoginskaya16}
Anthony~H. Dooley, Kathryn~E. Hare, and Maria Roginskaya.
\newblock {On {$L^p$}-improving measures}.
\newblock {\em Rev. Mat. Iberoam.}, 32(4):1211--1226, 2016.

\bibitem{DyatlovZahl2016}
Semyon Dyatlov and Joshua Zahl.
\newblock {Spectral gaps, additive energy, and a fractal uncertainty
  principle}.
\newblock {\em Geom. Funct. Anal.}, 26(4):1011--1094, 2016.

\bibitem{FanLauRao02}
Ai-Hua Fan, Ka-Sing Lau, and Hui Rao.
\newblock {Relationships between different dimensions of a measure}.
\newblock {\em Monatsh. Math.}, 135(3):191--201, 2002.

\bibitem{FengNguyenWang2002}
De-Jun Feng, Nhu~T. Nguyen, and Tonghui Wang.
\newblock Convolutions of equicontractive self-similar measures on the line.
\newblock {\em Illinois J. Math.}, 46(4):1339--1351, 2002.

\bibitem{FraserHowroydYu2018}
Jonathan Fraser, Douglas Howroyd, and Han Yu.
\newblock {Dimension growth for iterated sumsets}.
\newblock Preprint, arXiv:1802.03324, 2018.

\bibitem{Hare88}
Kathryn~E. Hare.
\newblock {A characterization of {$L^p$}-improving measures}.
\newblock {\em Proc. Amer. Math. Soc.}, 102(2):295--299, 1988.

\bibitem{Hare15}
Kathryn~E. Hare.
\newblock {Self-affine measures that are {$L^p$}-improving}.
\newblock {\em Colloq. Math.}, 139(2):229--243, 2015.

\bibitem{HareRoginskaya05}
Kathryn~E. Hare and Maria Roginskaya.
\newblock {{$L^p$}-improving properties of measures of positive energy
  dimension}.
\newblock {\em Colloq. Math.}, 102(1):73--86, 2005.

\bibitem{Hochman2014}
Michael Hochman.
\newblock {On self-similar sets with overlaps and inverse theorems for
  entropy}.
\newblock {\em Ann. of Math. (2)}, 180(2):773--822, 2014.

\bibitem{KaenmakiLehrback2017}
Antti K{\"a}enm{\"a}ki and Juha Lehrb{\"a}ck.
\newblock {Measures with predetermined regularity and inhomogeneous
  self-similar sets}.
\newblock {\em Ark. Mat.}, 55(1):165--184, 2017.

\bibitem{KaenmakiLehrbackVuorinen2013}
Antti K{\"a}enm{\"a}ki, Juha Lehrback, and Matti Vuorinen.
\newblock {Dimensions, Whitney covers, and tubular neighborhoods}.
\newblock {\em Indiana Univ. Math. J.}, 62:1861--1889, 2013.

\bibitem{KaenmakiRossi2016}
Antti K{\"a}enm{\"a}ki and Eino Rossi.
\newblock {Weak separation condition, Assouad dimension, and Furstenberg
  homogenity}.
\newblock {\em Ann. Acad. Sci. Fenn. Math.}, 41:465--490, 2016.

\bibitem{LuukkainenSaksman1998}
Jouni Luukkainen and Eero Saksman.
\newblock {Every complete doubling metric space carries a doubling measure}.
\newblock {\em Proc. Amer. Math. Soc.}, 126(2):531--534, 1998.

\bibitem{MosqueraShmerkin2018}
Carolina Mosquera and Pablo Shmerkin.
\newblock Self-similar measures: asymptotic bounds for the dimension and
  {F}ourier decay of smooth images.
\newblock {\em Ann. Acad. Sci. Fenn. Math.}, accepted for publication, 2018.
\newblock arXiv:1710.06812.

\bibitem{ShmerkinSchmeling2010}
J\"{o}rg Schmeling and Pablo Shmerkin.
\newblock On the dimension of iterated sumsets.
\newblock In {\em Recent developments in fractals and related fields}, Appl.
  Numer. Harmon. Anal., pages 55--72. Birkh\"{a}user Boston, Inc., Boston, MA,
  2010.

\bibitem{Shmerkin2018}
Pablo Shmerkin.
\newblock {On {F}urstenberg's intersection conjecture, self-similar measures,
  and the $L^q$ norms of convolutions}.
\newblock {\em Ann. of Math. (2)}, accepted for publication, 2018.
\newblock arXiv:1609.07802.

\end{thebibliography}

\end{document}